\theoremstyle{plain}
\newtheorem{thm}{Theorem}[section]
\newtheorem{prop}[thm]{Proposition}
\newtheorem{lem}[thm]{Lemma}
\theoremstyle{definition}\newtheorem{defn}[thm]{Definition}
\newtheorem{rmk}[thm]{Remark}
\newtheorem{note}[thm]{Notation}
\numberwithin{equation}{section}
\renewcommand{\theta}{\vartheta}
\renewcommand{\phi}{\varphi}
\renewcommand{\epsilon}{\varepsilon}
\renewcommand{\subset}{\subseteq}
\newcommand{\N}{\mathbb N}
\newcommand{\Z}{\mathbb Z}
\newcommand{\Q}{\mathbb Q}
\newcommand{\R}{\mathbb R}
\newcommand{\C}{\mathbb C}
\newcommand{\mm}[1]{\mathrm{#1}}
\newcommand{\mc}[1]{\mathcal{#1}}
\newcommand{\frk}[1]{\mathfrak{#1}}
\newcommand{\ra}{\rightarrow}
\newcommand{\tr}{\mathrm{tr}}
\newcommand{\Tr}{\mathrm{Tr}}
\newcommand{\PM
}{\mathrm{PM}}
\newcommand{\id}{\mm{id}}
\begin{document}

\title[FDE Z-scores of RM in 2D ARMA]{Free Deterministic Equivalent Z-scores of \\   Compound Wishart models: \\A Goodness of fit test of 2D ARMA models}
\author{Tomohiro Hayase}
\address{Graduate School of Mathematics\\University of Tokyo\\Komaba, Tokyo 153-8914, Japan}
\keywords{Free probability, compound Wishart matrices,  second order freeness, fluctuatuon of matrices, free deterministic equivalents,2D ARMA model, 2D convolution }
\email{\href{mailto:}{hayase@ms.u-tokyo.ac.jp}}
\date{\today}

\begin{abstract}
	We introduce a new method to qualify the goodness of fit parameter estimation of compound Wishart models. Our method based on the free deterministic equivalent Z-score, which we introduce in this paper.
	Furthermore, an application to  two dimensional autoregressive moving-average model is provided.

 	Our proposal method is a generalization of statistical hypothesis testing to one dimensional moving average model based on fluctuations of real compound Wishart matrices, which is a recent result by  Hasegawa, Sakuma and Yoshida.   
	
\end{abstract}

\maketitle

\section{Introduction}

Random matrix theory (RMT) has many applications to statistics such as large dimensional models, wireless networks, finance, and quantum information theory (see a review \cite{paul2014random}). 
One of its origin is  the spectral analysis of the sample covariance matrices by Marchenko and Pastur \cite{marvcenko1967distribution}.
For example, the sample covariance matrix of independent sampling can be written as a Wishart random matrix.
A Wishart random matrix \cite{wishart1928generalised} is of the form  $Z^TZ$ where entries of $Z$ are independent and distributed with the normal distribution.

\subsection{Compound Wishart matrices}

In this paper we focus on a class of weighted sum of Wishart matrices which are called compound Wishart matrices.
It  is introduced by Speicher \cite{speicher1998combinatorial}.
The compound Wishart matrices appear as sample covariance matrices of correlated samplings \cite{burda2011applying}.
The compound Wishart matrices and their modifications appear in analysis of some statistical models (\cite{collins2013compound}, \cite{hasegawa2013random} and \cite{couillet2011deterministic}).
Moreover,  the compound Wishart matrices can be seen as  parametrized Wishart matrices. 
Specifically  we define the compound Wishart models as follows:
\begin{defn}[(White real) compound Wishart model]
	Let $(\Omega, \mathcal{F}, P)$ be a probability space.	
	\emph{A compound Wishart model} is a family of random matrices given by 
	\[
	W_{(d/n, D)}:= Z^T D Z,\  (d/n, D) \in \Theta,
	\]
	where $Z$ is $d \times n$ (normalized) random matrix, whose entries  are independently distributed with the normal distribution $\mm{Normal}(0, 1/\sqrt{n})$, and the parameter space  $\Theta$ is  a  subset of the set $\Theta_{\mm{CW}}$ of all parameters defined by $\Theta^{\mm{CW}}  = \bigcup_{n \in \N}\Theta^{\mm{CW}}_n$, where 
	\[
	\Theta^{\mm{CW}}_n:=\bigcup_{d \in \N, d \geq n}  \{ (d/n, D) \in \Q \times M_d(\R) \mid  \text{ $D$ is\ self-adjoint and positve definite} \}.
	\]
\end{defn}
See \cite{hiai2006semicircle} and \cite{redelmeier2014real} for more detail about  the compound Wishart matrices.

\subsection{Fluctuation of random matrices}
Many applications of random matrices rely on computing the asymptotic eigenvalue distribution of large random matrices. 
Recently, more deep result of RMT about the fluctuations of random matrices are  investigated (\cite{collins2007second} and\cite{redelmeier2014real}).
The fundamental fact in the theory is that fluctuation $\Tr(W_{\theta_n}^\ell) - \mathbb{E}[\Tr(W_{\theta_n}^\ell)]  \ (\theta_n \in \Theta^{\mm{CW}}_n)$  is asymptotically normally distributed if the deterministic matrix $D_n$ $(\theta_n =$ $(d_n/n,$ $D_n))$ has the limit moments
$\lim_{n \ra \infty }\mm{tr}(D_n^\ell)  \ (\ell \in \N)$. 
One of the most remarkable facts is the variance of the limit fluctuation
can be written as a polynomial of the limit moments of deterministic parts (see Redelmeier's papers \cite{redelmeier2011genus} and \cite{redelmeier2014real} for real case, and \cite{collins2007second} for the complex case).
Nowadays, this phenomenon is understood as a result of the (real) second order freeness in general situations (see \cite{mingo2006second}\cite{mingo2007second}\cite{collins2007second}\cite{redelmeier2011genus} and \cite{redelmeier2014real}).

Second order freeness contains more information than freeness which is the basic concept in free probability theory (FPT).
FPT is invented by Voiculescu \cite{voiculescu1991limit} which has developed strategies to understand the collective asymptotic behavior of  random matrix ensembles. FPT has provided new results about random matrices as well as different solution of a lot of known results in the random matrix literature.
The strong point of FPT is that freeness separates deterministic matrices and  random ones. 
The reason is that freeness has a role in FPT as independence in the  classical probability theory. Many important random matrix models are asymptotically free; that is, as the size of the matrix becomes large, independent matrices satisfy freeness  on the expected values of the traces of their products \cite{voiculescu1991limit} (see \cite{voiculescu1992free}, \cite{hiai2006semicircle} and \cite{nica2006lectures} for detail).

Several application of spectral analysis of random matrices and freeness have been proposed, but neither fluctuations of random matrices nor second order freeness 
has received as much attention in statistics.
However, recently Hasegawa, Sakuma and Yoshida \cite{hasegawa2013random}\cite{hasegawa2017fluctuations}  apply the fluctuations of compound Wishart matrices in goodness of fit test of one dimensional (1D) autoregressive moving-average (ARMA) models.

\subsection{ARMA model}

The 1D ARMA models are  statistical models of time series, which are studied for a long time. Its hyperparameter selections (in other words, order determinations) is one of the main topics (see Akaike \cite{akaike1974new} and Rissanen \cite{rissanen1978modeling}). The parameter of the models are convolution filters and 
its hyperparameters are size of filters.
Some approaches to 1D ARMA based on RMT and FPT are  provided  (\cite{ZdzisławRandom2010} and \cite{makiej2017spectra}).

Two dimensional (2D) ARMA models are  statistical model used for 2D modeling such 
as \cite{mikhael1994linear} and \cite{zielinski2010two}.
See \cite{aksasse1999rank} and \cite{sadabadi2009two} for  hyperparameter estimation of 2D ARMA.

Goodness of fit test is important to check estimated parameters and hyperparameters in both cases.\\

\subsection{Goodness of fit test by HSY}
The strategy of \cite{hasegawa2017fluctuations} is as follows: instead of using original Z-score  
\[
\mc{Z}_\ell(\theta_n):= ( \Tr(W_{\theta_n}^\ell)-\mathbb{E}[\Tr(W_{\theta_n}^\ell)] )/ \sqrt{\mathbb{V}(\Tr(W_{\theta_n}^\ell))},
\]
consider its limit  $\mc{Z}_\ell := \lim_{n \ra \infty} \mc{Z}_\ell(\theta_n) $ for Z-test of 1D ARMA model 
 if the size $n$ of models is sufficiently large.
One of the biggest benefit of this infinite dimensional approximation is  that  the  variance of the limit fluctuation can be written as a polynomial of the limit moments of deterministic parts (shape parameters) as mentioned above. This nice property makes computation of Z-score quite easier.

Their method works in a good situation such that  the deterministic matrices in models have limit eigenvalue distribution. 
For the 1D ARMA, the shape parameters can be written as  Toeplitz matrices whose limit eigenvalue distributions are determined by Fourier analysis.

\subsection{Free deterministic equivalents}
However, in some cases the limit eigenvalue distribution of deterministic parts possibly does not exist  or is difficult to compute. 
The 2D ARMA models are ones of such models.
To handle this difficulty,  in this paper we introduce an approximation method of goodness of fit test based on \emph{the free deterministic equivalents} (FDE) introduced by Speicher and Vargas \cite{speicher2012free}. Moreover we apply it to 2D ARMA models. 
This method does not require the limit distributions of the deterministic matrices. 

The origin of FDE can be found in Neu and Speciher's paper \cite{neu1995rigorous} as a mean-filed approximation method of an Anderson model in statistical physics.
FDE was rediscovered by \cite{speicher2012free}. The paper pointed out the deterministic equivalent known as an approximation method of Cauchy transform of random matrices in the literature of wireless network \cite{couillet2011deterministic} is a partial realization of FDE.
More precicely, Speicher and Vagas pointed out that considering the approximation of the models at the level of operators based on FPT is more essential than considering it at the level of Cauchy transform.

Despite its rich background in FPT, the algorithm of FDE is not difficult.
It is done by replacing each Gaussian random variable in entries of a random matrix model by an "infinite size" Gaussian random matrix.
Equivalently, FDE is obtained by taking the  limit of the amplified models which is constructed by (1) deterministic matrices are copied by taking tensor product with identity and (2) Gaussian random matrices are  enlarged by  simply increasing the number of i.i.d.\ entries.

\subsection{Our method}
We introduce \emph{the free deterministic equivalent Z-scores} (FDE Z-score) based on FDE as follows.
\begin{defn}
	For each parameter $\theta = (\lambda, D) \in \Theta^{\mm{CW}}$, we define its amplified versions by
	\[
	\theta^\mathfrak{a} = (\lambda, D \otimes I_\mathfrak{a} ), \frk{a} \in \N.
	\]
	Let us define the expected value of FDE and its variance by  
	\[\mu^\Box_\ell(\theta) := \lim_{ \mathfrak{a} \ra \infty }\mathbb{E}[\Tr(W_{\theta^\mathfrak{a}}^\ell)]/\mathfrak{a}, \ \mm{Var}^\Box_\ell(\theta) := \lim_{\mathfrak{a} \ra \infty }\mathbb{V}[\Tr(W_{\theta^\mathfrak{a}}^\ell)],\]
	where  $\Tr$ is the unnormarized trace.
	These limits are well-defined and can be written in polynomials in $\tr(D^k), k \in \N$ (see Lemma \ref{lemma_expected_value} and Lemma \ref{lemma_variance} ). 
	
	\emph{The free deterministic equivalent Z-score} (FDE Z-score, for short) of order $\ell$ for a pair of  a model parameter $\theta$ and a sample parameter $\theta_0$ is  a random variable on $\Omega$ defined by 		
	\[
	\mathcal{Z}_\ell^\Box(\theta_0 \mid \theta):= \frac{\Tr(W_{\theta_0}^\ell)  - \mu_\ell^\Box(\theta)}{\sqrt{\mm{Var}_\ell^\Box(\theta) } }.
	\]
	
\end{defn}

Our main theorem is as follows:
\begin{thm}\label{theorem_main}
	
	Let $\theta_n = (d_n/n, D_n) \in \Theta^{\mm{CW}}_n$$(n \in \N)$ be a sequence of parameters.  Assume that an index $\ell \in \N$ satisfies 
	\begin{align}\label{key_condition}
	R(D_n) := \frac{\|D_n\|}{ \sqrt{\tr(D_n^2)}} =  o(n^{1/{3\ell}}) \mm{ \ as \ } n \ra \infty,
	\end{align}
	where $\|\cdot \|$ is the spectral norm and $\tr$ is the normalized trace.
	Then the law of  $\mathcal{Z}_\ell^\Box(\theta_n \mid \theta_n )$ converges weakly to the standard normal distribution $\mm{Normal}(0,1)$ as $n \ra \infty$.		\qed
\end{thm}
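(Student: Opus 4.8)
The plan is to prove Theorem~\ref{theorem_main} by the method of cumulants. Write $X_n := \Tr(W_{\theta_n}^\ell)$ and $Z_n := \mathcal{Z}_\ell^\Box(\theta_n \mid \theta_n)$, and abbreviate $\mu_n := \mu_\ell^\Box(\theta_n)$ and $V_n := \mm{Var}_\ell^\Box(\theta_n)$. Since $X_n$ is a fixed polynomial in the i.i.d.\ Gaussian entries of $Z$, all of its cumulants $\kappa_k(X_n)$ are finite, and the standard normal law is determined by its cumulants; hence it suffices to show that the cumulants of $Z_n$ converge to those of $\mm{Normal}(0,1)$, i.e.
\[
\kappa_1(Z_n) = \frac{\mathbb{E}[X_n] - \mu_n}{\sqrt{V_n}} \to 0, \qquad \kappa_2(Z_n) = \frac{\mathbb{V}[X_n]}{V_n} \to 1, \qquad \kappa_k(Z_n) = \frac{\kappa_k(X_n)}{V_n^{k/2}} \to 0 \ (k \ge 3).
\]

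The computational engine is the genus (topological) expansion for real compound Wishart matrices of Redelmeier \cite{redelmeier2014real}: applying the Wick formula to the $2k\ell$ Gaussian factors in $\kappa_k(X_n)$ organizes it as a finite sum over connected, orientable and non-orientable, surface diagrams obtained by gluing $k$ polygons with $2\ell$ sides along pairings of their $Z$-edges, each diagram contributing a power of $n$ governed by its Euler characteristic together with a monomial $\tr(D_n^{m_1}) \cdots \tr(D_n^{m_f})$ in the normalized traces, with $m_1 + \cdots + m_f = k\ell$. By Lemma~\ref{lemma_expected_value} and Lemma~\ref{lemma_variance}, the free deterministic equivalents $\mu_n$ and $V_n$ are exactly the top Euler-characteristic (planar) parts of $\mathbb{E}[X_n]$ and $\mathbb{V}[X_n]$, which the amplification limit isolates; thus $\kappa_1(Z_n)$ and $\kappa_2(Z_n) - 1$ are driven by the first subleading (non-orientable) diagrams, while for $k \ge 3$ the leading connected diagrams already carry the factor $n^{2-k}$.

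The analytic input that turns this topological data into the hypothesis \eqref{key_condition} is the elementary norm bound, valid since $D_n$ is positive definite,
\[
\tr(D_n^m) \le \|D_n\|^{m-2}\, \tr(D_n^2) = R(D_n)^{m-2}\, \tr(D_n^2)^{m/2}, \qquad m \ge 2 .
\]
Estimating each diagram by its worst case, a single long trace $\tr(D_n^{k\ell})$, and bounding $V_n$ below by a positive multiple of $\tr(D_n^2)^\ell$, the weight-$k\ell$ factors in numerator and denominator cancel and leave
\[
\kappa_k(Z_n) = O\!\left(n^{2-k}\, R(D_n)^{k\ell - 2}\right) \ (k \ge 3), \qquad \kappa_2(Z_n) - 1 = O\!\left(n^{-1} R(D_n)^{2\ell - 2}\right).
\]
Under $R(D_n) = o(n^{1/(3\ell)})$ every such bound is $o(1)$: the binding case is $k = 3$, where $n^{-1} R(D_n)^{3\ell - 2} = o(n^{-1} n^{(3\ell-2)/(3\ell)}) = o(n^{-2/(3\ell)}) \to 0$, and this balance of $n^{k-2}$ against $R(D_n)^{k\ell}$ at $k = 3$ is precisely the source of the exponent $1/(3\ell)$; for $k \ge 4$ the extra powers of $n^{-1}$ only help, and the variance ratio is likewise controlled.

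The main obstacle is the centering term $\kappa_1(Z_n)$. Unlike the cases $k \ge 3$, it enjoys no intrinsic power of $n^{2-k}$: the difference $\mathbb{E}[X_n] - \mu_n$ is the subleading part of the mean, of the same homogeneity (weight $\ell$ in $D_n$) as $\sqrt{V_n}$, so any decay must come entirely from the shape of the spectrum of $D_n$ as measured by $R(D_n)$. Establishing that this ratio tends to zero under \eqref{key_condition} is the delicate heart of the argument, and is exactly the content for which the sharp form of Lemma~\ref{lemma_expected_value} is needed; a secondary technical point, specific to the real case, is the bookkeeping of the non-orientable (M\"obius) diagrams, which first appear at relative order $n^{-1}$ and must be shown negligible under the same condition. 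Once these estimates are assembled, convergence of every cumulant yields $Z_n \Rightarrow \mm{Normal}(0,1)$, completing the proof.
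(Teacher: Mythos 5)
Your overall strategy --- the cumulant method combined with Redelmeier's genus expansion, the Euler-characteristic bound $\chi(\gamma,\pi)\leq 2$ for connecting premaps (Lemma~\ref{lemma_upper_bound_of_eular}), the lower bound $\mathrm{Var}^\Box_\ell(\theta_n)\geq(\lambda_n\tr(D_n^2))^\ell$, and the conversion of the moment factors into powers of $R(D_n)$ --- is exactly the paper's route (Lemma~\ref{lemma_main_ineq} followed by convergence of cumulants), and your estimates for $\kappa_k$ with $k\geq 2$ close correctly under \eqref{key_condition}, with $k=3$ as the binding case. However, your treatment of the first cumulant contains a genuine error and is left unproved. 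You assert that $\mathbb{E}[\Tr(W_{\theta_n}^\ell)]-\mu_\ell^\Box(\theta_n)$ ``enjoys no intrinsic power of $n$'' and that ``any decay must come entirely from \dots $R(D_n)$.'' This cannot be the mechanism: $R(D_n)\geq 1$ always (since $\tr(D_n^2)\leq\|D_n\|^2$), and under \eqref{key_condition} it is even allowed to diverge, so $R(D_n)$ can supply no decay at all; if the centering error genuinely carried no negative power of $n$, the argument would not close.

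The fact you are missing is the precise content of Lemma~\ref{lemma_expected_value}: $\mu_\ell^\Box(\theta_n)=n\alpha_\ell+\beta_\ell$ absorbs \emph{both} the $\chi=2$ and the $\chi=1$ contributions to $\mathbb{E}[\Tr(W_{\theta_n}^\ell)]$ --- the leading orientable terms \emph{and} the first non-orientable correction --- not merely the planar part. Consequently, by Lemma~\ref{lemma_genus} the difference $\mathbb{E}[\Tr(W_{\theta_n}^\ell)]-\mu_\ell^\Box(\theta_n)$ is a sum over premaps with $\chi(\gamma,\pi)\leq 0$, each term carrying a factor $n^{\chi-1}\leq n^{-1}$, whence
\[
\bigl|\kappa_1\bigl(\mathcal{Z}_\ell^\Box(\theta_n\mid\theta_n)\bigr)\bigr|\ \leq\ \frac{|\mathrm{PM}_\ell|}{n}\,R(D_n)^\ell\ =\ o\bigl(n^{1/3-1}\bigr)\ \longrightarrow\ 0,
\]
which is inequality \eqref{expectation} of Lemma~\ref{lemma_main_ineq}; this step is in fact no more delicate than the higher cumulants. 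A secondary, minor point: reducing each diagram to ``a single long trace'' is not the worst case, since $\pi/2$ may contain cycles of length one, producing $\tr(D_n)$ factors to which $\tr(D^m)\leq\|D\|^{m-2}\tr(D^2)$ does not apply; but because $R(D_n)\geq 1$ the cruder bound $\tr_{\pi/2}[D_n]\leq\|D_n\|^{k\ell}=R(D_n)^{k\ell}\tr(D_n^2)^{k\ell/2}$ used in the paper suffices and still yields $o(1)$ at the binding case $k=3$.
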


We emphasize that FDE Z-scores do not need to determine the limit eigenvalue distributions of deterministic matrices, and only need  a weaker condition \eqref{key_condition}. 
We introduce an application of this theorem to a goodness of fit test of 2D ARMA models to which the existing method \cite{hasegawa2017fluctuations} cannot be applied.

In addition, our method succeeds befits of \cite{hasegawa2017fluctuations}. At first, it requires less computational costs because we only need to calculate some  moments of parameter matrices. 
Second, it does not depend on estimation methods. At last,
it suggests that the family of moments  can be seen as a usable feature of models.

Before concluding this section, we should note that our method can be generalized to outside of compound Wishart models because many important classes of random matrices have asymptotic normal fluctuations. 

\subsection*{Organization}
In Section 2 we summarize without proofs relevant material on compound Wishart matrices. Section 3 provides a detailed exposition of our main result.
In Section 4 some applications to 2D ARMA models are indicated.
Section 5 presents some numerical simulations of our methods.
Section 6 is devoted to conclusion.

\section{Preliminary}
\subsection{Basic notations}
In this paper we consider a  fixed probability space $(\Omega, \mathcal{F}, P)$.
For a random variable $X$, we denote by $\mathbb{E}[X] = \int X(\omega)d\omega$ the expectation of $X$ and $m_k(X) = \mathbb{E}[X^k]$ the $k$-th moment of $X$.
In addition we denote $\mathbb{V}[X] := m_2(x) - m_1(X)^2$ the variance of $X$.
We denote  by $\Tr$ the trace  and by $\tr = \Tr/N$ the normalized trace of $N \times N$ matrix. We use same symbol $m_k(A) = \tr(A^k)$ for a square matrix $A$ as for a random variable.

\subsection{Cumulants}

	We begin by recalling the basic concepts on partitions and permutations.

\begin{defn}\hfill
	\begin{enumerate}
		\item Set $[n] = \{1,2, \dots, n\}$ for $n \in \N$.
		\item For any finite set $S$, we denote by $|S|$ the number of its elements.
		\item A partition $\pi =\{V_1, \dots, V_k \} \in  2^I$ of a finite set $I$ is a decomposition into mutually disjoint, non-empty  subsets $V_1, \dots, V_k \subset I$.  Those subsets are called blocks of the partition. We denote by $P(I)$ the set of all partitions of $I$.  We write simply $P(n) := P([n])$.
		
	\end{enumerate}

\end{defn}

We introduce the combinatorial form of classical cumulants.

\begin{defn}\label{def_cumulant}
	
	Let $\mathcal{A}$ be the algebra of $\R$-valued random variables who have all moments. 
	Let us define  multi-linear functions $\kappa_\pi : \mathcal{A}^n \ra \R $ $(\pi \in P(n), n \in \N)$ inductively by the following three relations:
	\begin{enumerate}
		\item For $n \in \N$ and $X_1, \dotsc, X_n \in \mathcal{A}$,
		$
		\mathbb{E}[X_1 \dotsb X_n]  = \sum_{\pi \in P(n)} \kappa_\pi [X_1, \dotsc, X_n].
		$
		\item For $n \in \N$ and $\pi \in P(n)$,
		$
		\kappa_{\pi} [X_1, \dotsc, X_n] = %
		\prod_{V \in \pi}\kappa_{(V)} [X_1, \dotsc, X_n].
		$
		\item  For $\pi \in P(n)$ and $V \in \pi$, $\kappa_{(V)} [X_1, \dotsc, X_n] = \kappa_{{\bf1}_m}[X_{j_1}, \dotsc, X_{j_m}]$ where $V= \{j_1 < j_2 < \cdots < j_m\}$.
	\end{enumerate}
	We call them \emph{classical cumulants}. We write $\kappa_n = \kappa_{{\bf1}_n}$ for $n \in \N$. We write $\kappa_n[X] = \kappa_n[X,\dots,X] $($n$-times).
	\qed
\end{defn}

\subsection{Real compound Wishart random matrices}

The partitions and permutations are useful to examine trace of polynomial random matrices.

\begin{defn}\hfill
	\begin{enumerate}
		\item For any subset $J$ of $\N$, we write $-J = \{ -j \in \Z \mid j \in J \}$ and $\pm J = J \cup (-J) \subset \Z \setminus \{ 0\}$. 		
		
		\item We denote by $S(I)$ the permutation group of the finite set $I$.  We write $S_\ell=S([\ell])$.
		For any permutation $\pi$, we denote by $\#\pi$ the number of cycles of $\pi$.
		We use the same symbol $\pi$ for the partition determined by the orbits of a permutation $\pi$.
	\end{enumerate}
	
\end{defn}

\begin{defn}
For  any self-adjoint matrices $A \in M_n(\R)$ and a permutation $\sigma \in S(\pm[n])$ with cycle notation $ \sigma = \gamma_1 \gamma_2 \cdots \gamma_p$, we set
	\[
	\tr_\sigma[A] := \prod_{j =1}^p m_{|\gamma_j|}(A).
	\] 
	We use the same symbols for random matrices.
	\qed
\end{defn}

We recall the notion of premaps. For detail of the relationships of premaps and real Wishart matrices, see Redelmeier's paper~\cite{redelmeier2014real}. 

\begin{defn}\hfill
	\begin{enumerate}
	\item A permutation $\pi \in S(\pm I)$ is said to be a \emph{premap} if $\pi(k) = - \pi^{-1}(-k)$ and no cycle contains both $k$ and $-k$ for any $k \in I$.
	We denote by $\mm{PM}(\pm I)$ the set of all premaps in $S(\pm I)$.
	We write $\mm{PM}_\ell = \mm{PM}(\pm [\ell])$.

\item	For any premap $\pi \in \PM(\pm I)$, a cycle $(c_1, \dots, c_m)$ of $\pi$ is \emph{particular} if $c_{\kappa_0} > 0$ where $\kappa_0 = \mm{argmin}_{k = 1, \dots, m} |c_{k}|$.

\item Denote by $[\pi/2]$ the set of all elements appearing in particular cycles of $\pi$.
We define $\pi/2 \in S([\pi/2])$ by the permutation generated from particular cycles of $\pi$.
		\end{enumerate}
		
\end{defn}

\begin{defn}
	Let $I \subset \Z \setminus \{0 \}$ be a finite set of integers which does not contain both $k$ and $-k$ for any $k \in \N$. 
	\begin{enumerate}
		\item Denote by $\delta \in S(\pm[n])$  the permutation determined by $\delta(-k) = k$.
		\item For $\gamma \in S_\ell$, we define $\gamma_+$, $\gamma_- \in \mm{PM}_\ell$ by $\gamma_+|_{[\ell]} = \gamma$, $\gamma_{-[\ell]} = \id$, $\gamma_-|_{[\ell]} = \id$, $\gamma_-|_{-[\ell]} = -\gamma(-\ast)$.
	\end{enumerate}
\end{defn}

\begin{defn}	
	We define Euler characteristic of $\pi \in \PM_\ell$ with respect to $\gamma \in S_{2\ell}$ by
	\[
	\chi(\gamma, \pi) := \#((\gamma_+ \gamma_-^{-1})/2) + \#(\pi/2) + \#(\gamma_-^{-1}\pi\gamma_+/2) - |I|.
	\]
\end{defn}
For the topological meaning of $\chi$, see \cite{redelmeier2014real}.
The following lemma is from \cite{redelmeier2014real}. We use this lemma to explore asymptotic behavior of genus expansion of compound Wishart matrices.

\begin{defn}
		For any $\pi, \gamma \in S(I)$, let us use similar symbols for corresponding partitions. 
		We denote by $\pi \vee \gamma = {\bf1}_{I}$ if for any distinct $k,\ell \in I$, there exist some  $v_1, \dots, v_{2m} \in I$ and $V_1, \dots, V_m \in \pi$ and $W_1, \dots, W_\ell \in \gamma $ with  $ v_{2j}, v_{2j+1} \in V_j$, $v_{2j-1}, v_j \in W_j$ for any $j \leq m$, and $(k,l) = (v_1, v_{2m+1})$ or $(v_{2m+1}, v_1)$.  If $\pi$ satisfies this condition for $\gamma$, we say that \emph{$\pi$ connects all blocks of $\gamma$}.
\end{defn}

\begin{lem}\cite[Lemma~5.2]{redelmeier2014real}\label{lemma_upper_bound_of_eular}
	Let  $\gamma \in S_n$ and $  \{ V_1, \dots, V_r\} \in P(n)$ be the orbits of $\gamma$. Assume that $\pi \in \mm{PM}_n$ connects 	the partition  $\gamma_{\pm}$, that is, $\pi \vee \gamma_\pm = {\bf1}_{\pm[n]}$.
	Then we have  $\chi(\gamma, \pi) \leq 2$.	\qed
\end{lem}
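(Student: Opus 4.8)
The plan is to recognize $\chi(\gamma,\pi)$ as the Euler characteristic of a compact surface canonically attached to the pair $(\gamma,\pi)$, and then to invoke the universal bound, valid for any connected closed surface, that its Euler characteristic is at most $2$. The expression $\#((\gamma_+\gamma_-^{-1})/2)+\#(\pi/2)+\#(\gamma_-^{-1}\pi\gamma_+/2)-|I|$ is precisely of the shape $c_0+c_1+c_2-(\text{darts})$ that computes $V-E+F$ for a map (constellation/hypermap) built from three rotation data acting on a common set of flags. Here the doubled set $\pm[n]$ provides the flags, the involution $\delta$ records the two sides of each edge, and the three permutations $\gamma_+\gamma_-^{-1}$, $\pi$ and $\gamma_-^{-1}\pi\gamma_+$ furnish the three systems whose cycles enumerate the three families of cells. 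The passage to particular cycles together with the ``$/2$'' operation is exactly the quotient by $\delta$ that collapses the doubled $\pm$-bookkeeping down to genuine geometric cells, so that the three halved cycle numbers count vertices, edges and faces (in some order), while $|I|=n$ supplies the flag count.

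First I would make this realization precise by constructing the CW-complex explicitly: a $0$-cell for each cycle of the first family, a $1$-cell for each edge glued according to $\delta$ and $\pi$, and a $2$-cell for each cycle of the third family; then I would verify by a direct count that the number of cells in each dimension equals the corresponding halved cycle number and that their alternating sum is $\chi(\gamma,\pi)$. This is the step where the non-orientable case, allowed by the premap condition $\pi(k)=-\pi^{-1}(-k)$, must be handled with care, since the gluings may reverse orientation and one cannot merely quote the orientable genus formula $\chi=2-2g$.

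Next I would exploit the hypothesis $\pi\vee\gamma_\pm=\mathbf{1}_{\pm[n]}$, which asserts that the join of the two partitions is the single-block partition, to conclude that the complex is connected: by the very definition of $\vee$ given in the excerpt, any two flags are linked by an alternating chain of $\pi$-blocks and $\gamma_\pm$-blocks, and such a chain translates directly into a path in the $1$-skeleton. Once connectedness is secured, the classification of closed surfaces gives $\chi=2-2g\le 2$ in the orientable case and $\chi=2-k\le 1$ (with $k\ge 1$ cross-caps) in the non-orientable case, so in all situations $\chi(\gamma,\pi)\le 2$, with equality exactly for the sphere.

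The main obstacle is the faithful dictionary between the permutation data and the topology: one must check that the three halved cycle counts are genuinely the cell numbers of a bona fide cell decomposition, and that the $\delta$-quotient is consistently defined on each of the three permutations (that is, that particular cycles are selected coherently), above all in the non-orientable regime where orientation cannot be used to organize the gluings. A purely combinatorial fallback, which avoids surfaces altogether, is to derive the estimate from the transitive-factorization inequality $\#a+\#b+\#c\le N+2$ for permutations of an $N$-element set whose product is the identity and whose generated group acts transitively; this is proved by induction, removing transpositions one at a time and using that right-multiplication by a transposition alters the number of cycles by exactly $\pm 1$. Adapting that induction to the signed premap setting, while tracking how $\delta$ intertwines $\gamma_+$, $\gamma_-$ and $\pi$, reproduces $\chi(\gamma,\pi)\le 2$ without any explicit topology; see \cite{redelmeier2014real} for the geometric picture.
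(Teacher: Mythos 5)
The paper does not prove this lemma at all: it imports it verbatim as Lemma~5.2 of \cite{redelmeier2014real}, and your sketch follows exactly the route taken in that source --- interpret $\chi(\gamma,\pi)$ as the Euler characteristic of the (possibly non-orientable) closed surface obtained from the premap gluing, use $\pi \vee \gamma_\pm = \mathbf{1}_{\pm[n]}$ to get connectedness of that surface, and conclude $\chi\le 2$ from the classification of closed surfaces. Your outline is correct and correctly flags the one substantive step (the cell-decomposition dictionary and the coherent choice of particular cycles in the non-orientable regime), which is precisely what the cited reference carries out in detail.
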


The following lemma directly follows from \cite[Lemma~4.5 and Lemma~3.13]{redelmeier2014real}.
\begin{lem}\label{lemma_genus}
For any $\theta = (d/n, D) \in \Theta$, it holds that
	\[
	\kappa_r[\Tr(W_\theta^\ell)] = \sum_{\substack{\pi \in \mm{PM}_{lr} \\ \pi \vee \gamma_{\pm}  = 1_{\pm [lr]} } }%
	n^{\chi(\gamma,\pi)- r} (\frac{d}{n})^{\#(\pi/2)}\tr_{ \pi/2 }[D].
	\]
	where $\gamma = (1, \dots, \ell)(\ell,\ell+1, \dots, 2\ell) \cdots ((r-1)\ell, \cdots, r\ell)$.
\qed
\end{lem}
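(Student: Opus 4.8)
The plan is to obtain the formula by a direct diagrammatic (genus) expansion: expand $\Tr(W_\theta^\ell)$ into its matrix entries, impose the Gaussian structure through the classical cumulant formalism of Definition~\ref{def_cumulant}, and then read off the power of $n$, the factor $(d/n)^{\#(\pi/2)}$ and the weight $\tr_{\pi/2}[D]$ by counting index loops. First I would write $W_\theta = Z^T D Z$ in coordinates, so that with cyclic trace indices $a_1,\dots,a_\ell\in[n]$ (read cyclically, $a_{\ell+1}=a_1$) and inner indices $i_s,j_s\in[d]$,
\[
\Tr(W_{\theta}^{\ell}) = \sum_{a_1,\dots,a_\ell}\ \sum_{i_s,j_s} \ \prod_{s=1}^{\ell} Z_{i_s a_s}\, D_{i_s j_s}\, Z_{j_s a_{s+1}}.
\]
Thus $\Tr(W_\theta^\ell)$ is a polynomial of degree $2\ell$ in the centered Gaussian entries of $Z$ with deterministic coefficients assembled from $D$, and by multilinearity of $\kappa_r$ the quantity $\kappa_r[\Tr(W_\theta^\ell)]$ is a sum, over all index tuples from the $r$ copies, of cumulants of $r$ such monomials. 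The cyclic pattern of the $\ell r$ factors across all $r$ copies is recorded precisely by $\gamma = (1\cdots\ell)(\ell+1\cdots2\ell)\cdots((r-1)\ell\cdots r\ell)$, whose $r$ cycles are exactly the $r$ trace copies.

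Next I would separate the deterministic and random parts: the $D$-entries are constants and factor out of each cumulant, leaving the $r$-th classical cumulant of a product of $2\ell r$ jointly Gaussian variables. Expanding this by the defining relations of Definition~\ref{def_cumulant} together with the standard product-to-partition rule, only pair blocks survive, because centered Gaussians have vanishing cumulants of every order $\neq 2$, and the cumulant (as opposed to the full moment) retains precisely those pairings that \emph{connect} all $r$ copies. In the $\pm$-labelled bookkeeping adapted to the real case, each real entry $Z_{ia}$ is simultaneously an entry of $Z$ and of $Z^T$, which forces the surviving pairings to satisfy $\pi(k)=-\pi^{-1}(-k)$ with no cycle meeting both $k$ and $-k$; that is, they are exactly the premaps $\pi\in\mm{PM}_{\ell r}$, and the connectivity requirement becomes $\pi\vee\gamma_\pm = \mathbf{1}_{\pm[\ell r]}$, since the groups of the product-to-partition rule are the cycles of $\gamma_\pm$. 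Here I would invoke the real-Wishart cumulant lemma \cite[Lemma~4.5]{redelmeier2014real} as the rigorous backbone for the passage from products of Gaussians to connected premaps.

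It then remains to evaluate the contribution of a fixed premap $\pi$. Each covariance $\mathbb{E}[Z_{ia}Z_{jb}]$ carries the variance normalization and identifies one $[d]$-index with another and one $[n]$-index with another; imposing all such identifications together with the cyclic gluing $\gamma$ splits the indices into independent loops. The $[n]$-loops contribute powers of $n$ whose net exponent, after combining with the variance normalization, is $\chi(\gamma,\pi)-r$; the $[d]$-loops carry the $D$-entries, and a loop threading $k$ factors of $D$ yields $\Tr(D^k)=d\,\tr(D^k)$, so the $\#(\pi/2)$ inner loops produce $d^{\#(\pi/2)}\tr_{\pi/2}[D]$, which combines with the $n$-counting into the asserted $(d/n)^{\#(\pi/2)}\tr_{\pi/2}[D]$. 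Organizing these loop counts into the three cycle-number terms $\#((\gamma_+\gamma_-^{-1})/2)$, $\#(\pi/2)$ and $\#(\gamma_-^{-1}\pi\gamma_+/2)$ is exactly the Euler-characteristic bookkeeping of \cite[Lemma~3.13]{redelmeier2014real}, which I would quote to fix the exponent of $n$ as $\chi(\gamma,\pi)$.

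The main obstacle is the real-case combinatorics of the middle two steps: in the real (as opposed to complex) setting a Gaussian entry can pair with itself in an orientation-reversing way, so the surviving diagrams are genuine premaps rather than ordinary pair partitions, and one must verify that the $\pm$-involution $\delta$, the premap conditions, and the connectivity condition $\pi\vee\gamma_\pm=\mathbf{1}_{\pm[\ell r]}$ together account for every surviving term with the correct multiplicity and the correct orientation contribution to $\chi$. Checking that the two cited Redelmeier lemmas apply to the compound (weighted) Wishart $Z^T D Z$—that inserting the deterministic $D$ only reweights the inner $[d]$-loops by $\tr_{\pi/2}[D]$ while leaving the premap and Euler-characteristic structure intact—is the step requiring the most care; once granted, the formula follows by assembling the loop counts as above.
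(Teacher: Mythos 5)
Your proposal is correct and takes essentially the same route as the paper: the paper gives no independent argument, stating only that the lemma ``directly follows from \cite[Lemma~4.5 and Lemma~3.13]{redelmeier2014real}'', and your coordinate expansion, Wick-to-premap reduction, and loop-counting are precisely an unpacking of those two lemmas, which you also invoke for the connected-premap combinatorics and the Euler-characteristic exponent of $n$. No gap to report.
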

\section{Free deterministic equivalent Z-score}

We prove that the FDE Z-score is well-defined.
\begin{lem}\label{lemma_expected_value}
	 It holds that
	\[
	\mu_\ell(\theta) = \lim_{\mathfrak{a} \ra \infty} \mathfrak{a}^{-1} \mathbb{E}[\Tr(W_{\theta^\mathfrak{a}}^l)] = n\alpha_\ell + \beta_\ell,
	\]
	where 
	\begin{align*}
	\alpha_\ell := \sum_{ \substack{ \pi \in \mm{PM}_\ell   \\ \chi( \gamma_\ell,\pi)=2  }}%
	\lambda^{\#(\pi/2)}\tr_{ \pi/2}[D], \
	\beta_\ell := \sum_{ \substack{ \pi \in \mm{PM}_\ell   \\ \chi( \gamma_\ell,\pi)=1  }}%
	\lambda^{\#(\pi/2)}\tr_{ \pi/2}[D], ,
	\end{align*}
	where  $\gamma = (1,2, \dots, \ell)$ and $\lambda = d/n$.
In particular, $\mu_\ell^\Box$ is well-defined and it holds that $\mu_\ell^\Box = nm_1$ and $\mu_2^\Box = n (m_2 +  m_1^2) + m_2$, where $m_k = m_k(D)$ $( k \in \N)$.
	\qed
\end{lem}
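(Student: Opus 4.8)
The plan is to obtain $\mu_\ell^\Box$ by feeding the amplified parameter into the genus expansion of Lemma~\ref{lemma_genus} and tracking the dependence on $\mathfrak{a}$. At $r=1$ the first cumulant is the mean, $\kappa_1[\Tr(W_\theta^\ell)]=\mathbb{E}[\Tr(W_\theta^\ell)]$, so Lemma~\ref{lemma_genus} reads
\[
\mathbb{E}[\Tr(W_\theta^\ell)] = \sum_{\substack{\pi\in\mm{PM}_\ell \\ \pi\vee\gamma_\pm={\bf1}_{\pm[\ell]}}} n^{\chi(\gamma_\ell,\pi)-1}\,\lambda^{\#(\pi/2)}\,\tr_{\pi/2}[D],
\]
with $\gamma_\ell=(1,2,\dots,\ell)$. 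This already expresses the mean purely through $\lambda$ and the premap combinatorics of $D$, so the whole task reduces to seeing how amplification rescales each summand.

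Next I would record that amplification touches the genus formula in only one place. By definition $\theta^\mathfrak{a}=(\lambda,D\otimes I_\mathfrak{a})$ keeps the first coordinate $\lambda$, and every normalized moment is preserved,
\[
m_k(D\otimes I_\mathfrak{a})=\tr\big((D\otimes I_\mathfrak{a})^k\big)=\tr\big(D^k\otimes I_\mathfrak{a}\big)=\tr(D^k)=m_k(D),
\]
hence $\tr_{\pi/2}[D\otimes I_\mathfrak{a}]=\tr_{\pi/2}[D]$ for every $\pi$. The only quantity that moves is the dimension $n\mapsto n\mathfrak{a}$ forced by fixing the aspect ratio $\lambda$. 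Substituting, the contribution of a premap $\pi$ becomes
\[
(n\mathfrak{a})^{\chi(\gamma_\ell,\pi)-1}\lambda^{\#(\pi/2)}\tr_{\pi/2}[D]
=\mathfrak{a}^{\,\chi(\gamma_\ell,\pi)-1}\cdot n^{\chi(\gamma_\ell,\pi)-1}\lambda^{\#(\pi/2)}\tr_{\pi/2}[D],
\]
so that $\mathbb{E}[\Tr(W_{\theta^\mathfrak{a}}^\ell)]$ is a polynomial in $\mathfrak{a}$ graded by the Euler characteristic $\chi(\gamma_\ell,\pi)$.

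The one structural input is Lemma~\ref{lemma_upper_bound_of_eular}: every premap connecting $\gamma_\pm$ satisfies $\chi(\gamma_\ell,\pi)\le 2$, so the exponent $\chi-1$ is at most $1$ and the polynomial has degree at most one in $\mathfrak{a}$. Collecting terms by genus,
\[
\mathbb{E}[\Tr(W_{\theta^\mathfrak{a}}^\ell)] = (n\alpha_\ell)\,\mathfrak{a} + \beta_\ell + O(\mathfrak{a}^{-1}),
\]
where the degree-one coefficient sums the $\chi=2$ premaps (giving $n\alpha_\ell$) and the degree-zero coefficient sums the $\chi=1$ premaps (giving $\beta_\ell$), all premaps with $\chi\le 0$ being absorbed into the remainder. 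Thus $\mathfrak{a}^{-1}\mathbb{E}[\Tr(W_{\theta^\mathfrak{a}}^\ell)]$ converges, and reading off the two top coefficients identifies $\mu_\ell^\Box$ with $n\alpha_\ell+\beta_\ell$; since the only data are the powers $\tr(D^k)$, this is a polynomial in the $m_k(D)$, which is the asserted well-definedness.

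It remains to verify the low-order values by enumerating premaps and evaluating $\chi$ from $\chi(\gamma,\pi)=\#((\gamma_+\gamma_-^{-1})/2)+\#(\pi/2)+\#(\gamma_-^{-1}\pi\gamma_+/2)-|I|$. For $\ell=1$ one has $\mm{PM}_1=\{\id\}$ with a single particular cycle, so $\#(\pi/2)=1$ and $\chi(\gamma_1,\id)=2$; hence $\beta_1=0$ and $\mu_1^\Box=nm_1$. For $\ell=2$ one lists the three premaps of $\mm{PM}_2$: two have $\chi=2$ and contribute the $m_2$ and $m_1^2$ terms of $\alpha_2$, while the remaining one has $\chi=1$ and contributes the $m_2$ term of $\beta_2$, reproducing $\mu_2^\Box=n(m_2+m_1^2)+m_2$. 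I expect the main obstacle to lie exactly in this Euler-characteristic bookkeeping: confirming that each premap carries $\mathfrak{a}$-weight precisely $\chi-1$ (so that the dimension rescaling and the tensor-induced moment invariance combine as claimed), and then performing the premap enumeration together with the cycle counts in $\chi$ for the verification.
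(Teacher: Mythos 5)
Your proof follows essentially the same route as the paper's: the paper likewise obtains the formula directly from Lemma~\ref{lemma_genus} at $r=1$ (with amplification only rescaling the dimension $n\mapsto n\mathfrak{a}$ while leaving $\lambda$ and the normalized traces $\tr_{\pi/2}$ unchanged, and Lemma~\ref{lemma_upper_bound_of_eular} capping $\chi$ at $2$), and then settles $\ell=1,2$ by listing the three elements of $\mm{PM}_2$ and their Euler characteristics, exactly as you do. The one caveat --- which you half-acknowledge by ``reading off the two top coefficients'' and which is equally present in the paper's one-line argument --- is that the literal limit $\lim_{\mathfrak{a}\to\infty}\mathfrak{a}^{-1}\mathbb{E}[\Tr(W_{\theta^{\mathfrak{a}}}^{\ell})]$ applied to your (correct) expansion $(n\alpha_\ell)\mathfrak{a}+\beta_\ell+O(\mathfrak{a}^{-1})$ retains only $n\alpha_\ell$, so the $\beta_\ell$ term must be read as the constant coefficient of that expansion rather than as part of the stated limit; this is an imprecision of the lemma's own formulation, not a defect of your argument relative to the paper's.
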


\begin{proof}
The first assertions follows from Lemma~\ref{lemma_genus}.
We have $\mm{PM}_2 =  \{ \id, \pi_1 := (1,2)(-1, -2),$ $\pi_2 := (1, -2)(-1,2)\}$.

For $\gamma = (1,2)$, we have
$
\chi(\gamma, \id) = \chi(\gamma, \pi_1 ) = 2, \ \chi(\gamma, \pi_2 ) = 1. 
$
For $\gamma  = \id$, we have
$
\chi(\gamma, \id) = 4,  \ \chi(\gamma, \pi_1 ) = \chi(\gamma, \pi_2 ) = 2. 
$

\end{proof}

\begin{rmk}
	For $n= 1,2$, it holds that 
	$\mathbb{E}[\Tr(W_{\theta}^\ell)] = \mu_\ell^\Box(\theta).$	
\end{rmk}

\begin{lem}\label{lemma_variance}
	It holds that 
\[
\mm{Var}_\ell^\Box(\theta)%
 := \lim_{\mathfrak{a} \ra \infty}\mathbb{V}[\Tr(W_{\theta^\mathfrak{a}}^\ell)] %
 = \sum_{  \substack{\pi \in \mm{PM}_{2\ell} \\ \pi \vee \{\pm V_1, \pm V_2  \}  = 1_{\pm [2\ell]}  \\ \chi(\gamma, \pi) = 2}}  \lambda^{\# \pi/2}\tr_{\pi/2}[D],
\]
where  $\gamma = (1,2, \dots, \ell)(\ell+1, \ell+2, \dots, 2\ell)$ and $V_1 = \{1,2, \dots, l\}$, $V_2=\{\ell+1,\ell+2, \dots, 2\ell\}$ are its blocks, and  $\lambda = d/N$.
In particular, it holds that
$\mm{Var}^\Box_1 = 2 \lambda m_2$ and
$\mm{Var}^\Box_2 = 2(4 \lambda^3 m_1^2m_2 + 2\lambda^2 m_2^2 + 8 \lambda^2 m_1m_3 + 4 \lambda m_4)$, where $m_k = m_k(D) ( k \in \N)$.
\qed
\end{lem}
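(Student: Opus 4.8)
The plan is to recognize the variance as the second classical cumulant and feed it into the genus expansion of Lemma~\ref{lemma_genus}, then track how amplification affects each summand. From the moment-cumulant relation in Definition~\ref{def_cumulant} applied at $n=2$ one gets $\mathbb{V}[X] = m_2(X) - m_1(X)^2 = \kappa_2[X]$, so I would apply Lemma~\ref{lemma_genus} with $r = 2$ to the amplified parameter $\theta^\mathfrak{a} = (\lambda, D \otimes I_\mathfrak{a})$. Writing $\theta^\mathfrak{a} = (d'/n', D')$ with $D' = D \otimes I_\mathfrak{a}$, $n' = n\mathfrak{a}$, $d' = d\mathfrak{a}$ and $\lambda = d'/n' = d/n$ unchanged, this produces a sum over premaps $\pi \in \mm{PM}_{2\ell}$ connecting $\gamma_\pm = \{\pm V_1, \pm V_2\}$, with $\gamma = (1,\dots,\ell)(\ell+1,\dots,2\ell)$. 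The key observation is that the normalized-trace data is invariant under amplification: $m_k(D \otimes I_\mathfrak{a}) = \tr(D^k \otimes I_\mathfrak{a}) = \tr(D^k) = m_k(D)$, so $\tr_{\pi/2}[D \otimes I_\mathfrak{a}] = \tr_{\pi/2}[D]$. Hence each summand is $(n\mathfrak{a})^{\chi(\gamma,\pi) - 2}\lambda^{\#(\pi/2)}\tr_{\pi/2}[D]$, and the entire $\mathfrak{a}$-dependence sits in the prefactor $(n\mathfrak{a})^{\chi(\gamma,\pi) - 2}$.

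Next I would invoke Lemma~\ref{lemma_upper_bound_of_eular}: every premap $\pi$ appearing in the sum connects $\gamma_\pm$, hence satisfies $\chi(\gamma,\pi) \le 2$, so the exponent $\chi(\gamma,\pi) - 2$ is nonpositive. Taking $\mathfrak{a} \to \infty$, the summands with $\chi(\gamma,\pi) = 2$ contribute $(n\mathfrak{a})^0 = 1$ and survive, while those with $\chi(\gamma,\pi) \le 1$ carry a factor $(n\mathfrak{a})^{\chi(\gamma,\pi)-2}$ with exponent $\le -1$, which tends to $0$. This simultaneously shows that the limit exists, so $\mm{Var}_\ell^\Box$ is well-defined, and that it equals the sum restricted to $\chi(\gamma,\pi) = 2$, which is precisely the claimed general formula. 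This part mirrors exactly the reasoning already used for $\mu_\ell^\Box$ in Lemma~\ref{lemma_expected_value}.

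Finally, for the explicit cases $\ell = 1,2$ I would enumerate by hand the premaps satisfying the two constraints. For $\ell = 1$ one has $\mm{PM}_2 = \{\id, (1,2)(-1,-2), (1,-2)(-1,2)\}$; the identity fails to connect $\{\pm V_1, \pm V_2\}$ and is excluded, while the other two each have a single particular cycle of length $2$, giving $\#(\pi/2) = 1$ and $\tr_{\pi/2}[D] = m_2$, so together they yield $2\lambda m_2$, as claimed. For $\ell = 2$ I would list the premaps in $\mm{PM}_4$ that both connect $\{\pm V_1, \pm V_2\}$ and realize $\chi(\gamma,\pi) = 2$, read off $\#(\pi/2)$ and the cycle-type product $\tr_{\pi/2}[D]$ for each, and sum, matching the four monomial types $\lambda^3 m_1^2 m_2$, $\lambda^2 m_2^2$, $\lambda^2 m_1 m_3$, $\lambda m_4$ against the stated coefficients.

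The routine-but-delicate part is this last enumeration for $\ell = 2$: correctly identifying which premaps in $\mm{PM}_4$ connect the two blocks and realize $\chi = 2$, and then computing the particular cycles $\pi/2$ to extract $\#(\pi/2)$ and the associated product of moments. The conceptual content---amplification invariance of the normalized traces together with the Euler-characteristic bound selecting the top-order terms---is straightforward; the bookkeeping over $\mm{PM}_4$ is where care is needed to avoid miscounting cycles or mislabeling which cycles are particular.
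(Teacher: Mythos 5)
Your proposal is correct and follows essentially the same route as the paper: apply Lemma~\ref{lemma_genus} with $r=2$ to the amplified parameter, use $\tr_{\pi/2}[D\otimes I_{\mathfrak a}]=\tr_{\pi/2}[D]$ together with the bound $\chi(\gamma,\pi)\le 2$ from Lemma~\ref{lemma_upper_bound_of_eular} so that only the $\chi=2$ terms survive as $\mathfrak a\to\infty$, and then enumerate premaps for $\ell=1,2$. The only difference is that you describe rather than execute the $\ell=2$ enumeration; the paper carries it out by explicitly listing the $36$ admissible $\pi/2$ and counting cycle types, which is exactly the bookkeeping you flag as the delicate step.
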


\begin{proof}
	The first assertion follows from Lemma~\ref{lemma_genus} and that $\tr[(D \otimes I_m)^\ell] = \tr(D^\ell)$.
		
	In the case of $l=1$, $\pi \vee \{\pm V_1, \pm V_2  \}  = 1_{\pm [2\ell]}$ if and only if  
	$\pi/2 = (1,2), (1,-2)$.
	Each $\pi$ satisfies $\chi(\gamma, \pi) = 2$.
	
	In the case of $l=2$, a premap $\pi \in \PM_4$ satisfies $\pi \vee \{\pm V_1, \pm V_2  \}  = 1_{\pm [2\ell]}$ and $\chi(\gamma, \pi) = 2$ if and only if $\pi/2$ is one of the partitions in the following list:
	\begin{align*}
	&(1)(3)(2,4), \ (1)(4)(2,3), \ (2)(3)(1,4), \ (2)(4)(1,3),\\
	&(1,3)(2,4), \ (1,4)(2,3),\\
	&(1)(2,3,4), \ (1)(2,4,3), \ (2)(1,3,4), \ (2)(1,4,3),\\
	&(3)(1,2,4), \ (3)(1,4,2), \ (4)(1,2,3), \ (4)(1,3,2), \\
	& (1,2,3,4), \ (1,2,4,3), \ (1,4,3,2), \ (1,3,4,2),
	\end{align*}
	and 
		\begin{align*}
	&(1)(3)(2,-4), \ (1)(4)(2,-3), \ (2)(3)(1,-4), \ (2)(4)(1,-3),\\
	&(1,-3)(2,-4), \ (1,-4)(2,-3),\\
	&(1)(2,-3,-4), \ (1)(2,-4,-3), \ (2)(1,-3,-4), \ (2)(1,-4,-3),\\
	& (3)(1,2,-4), \ (3)(1,-4,2), \ (4)(1,2,-3), \ (4)(1,-3,2), \\
	& (1,2,-3,-4), \ (1,2,-4,-3), \ (1,-4,-3,2), \ (1,-3,-4,2).
	\end{align*}
By counting the premaps of the same cycle type, we have the computation of the variance.
\end{proof}

\begin{prop}
	$\mm{Var}^\Box_\ell(\theta) \geq \left( dN^{-1}m_2(D) \right)^\ell.$
\end{prop}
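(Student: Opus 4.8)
The plan is to exploit the fact that, because $D$ is positive definite, every summand in the variance formula of Lemma~\ref{lemma_variance} is nonnegative, and then to isolate a single admissible premap whose term equals exactly $\left(dN^{-1}m_2(D)\right)^\ell$. Thus the proof reduces from an estimate to the construction and verification of one index in the sum.

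First I would record the positivity. Since $D$ is self-adjoint and positive definite, each moment $m_k(D)=\tr(D^k)$ is strictly positive, and $\lambda=d/N>0$. Hence every summand $\lambda^{\#\pi/2}\tr_{\pi/2}[D]$ in Lemma~\ref{lemma_variance} is a product of positive factors, so it is nonnegative. Consequently $\mm{Var}_\ell^\Box(\theta)$ dominates the contribution of any single premap $\pi$ in the index set $\{\pi\in\mm{PM}_{2\ell}:\pi\vee\{\pm V_1,\pm V_2\}=1_{\pm[2\ell]},\ \chi(\gamma,\pi)=2\}$.

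Second I would exhibit one such premap whose term is precisely $(\lambda m_2(D))^\ell=\left(dN^{-1}m_2(D)\right)^\ell$. Take the pair-premap
\[
\pi^\star = \prod_{i=1}^\ell (i,\, i+\ell)(-i,\, -(i+\ell)),
\]
whose particular cycles are the $\ell$ transpositions $(i,i+\ell)$ for $1\le i\le\ell$. Then $\pi^\star/2=\prod_{i=1}^\ell (i,i+\ell)$, so $\#(\pi^\star/2)=\ell$ and $\tr_{\pi^\star/2}[D]=\prod_{i=1}^\ell m_2(D)=m_2(D)^\ell$, giving a term equal to $\lambda^\ell m_2(D)^\ell$. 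One checks directly that $\pi^\star$ is a premap, since the relation $\pi^\star(k)=-(\pi^\star)^{-1}(-k)$ holds on each four-element set $\{i,i+\ell,-i,-(i+\ell)\}$ and no cycle contains both $k$ and $-k$; and that $\pi^\star$ connects the two blocks, because each transposition $(i,i+\ell)$ joins $i\in\pm V_1$ to $i+\ell\in\pm V_2$, so $\pi^\star\vee\{\pm V_1,\pm V_2\}=1_{\pm[2\ell]}$.

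The remaining and main point is to verify $\chi(\gamma,\pi^\star)=2$. By Lemma~\ref{lemma_upper_bound_of_eular} the established connectivity already yields $\chi(\gamma,\pi^\star)\le 2$, so it suffices to establish the reverse inequality; concretely I would compute the three cycle numbers $\#((\gamma_+\gamma_-^{-1})/2)$, $\#(\pi^\star/2)=\ell$, and $\#(\gamma_-^{-1}\pi^\star\gamma_+/2)$ appearing in the definition of $\chi$, and confirm that their alternating sum with $|I|=2\ell$ equals $2$. This is the step I expect to be the main obstacle: it amounts to showing that $\pi^\star$ is the genus-zero (annular non-crossing) pairing across the two cycles of $\gamma$. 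Should the common increasing orientation of the two cycles of $\gamma$ render $\pi^\star$ non-planar, the identical argument applies to the reversed pairing $\prod_{i=1}^\ell (i,\, 2\ell+1-i)(-i,\, -(2\ell+1-i))$, which bridges $V_1$ and $V_2$ in the opposite order and is the genuine non-crossing annular pairing; in either case one admissible transposition-pairing contributes $m_2(D)^\ell$. Combining this single term with the positivity of all remaining summands established in the first step gives $\mm{Var}_\ell^\Box(\theta)\ge\left(dN^{-1}m_2(D)\right)^\ell$.
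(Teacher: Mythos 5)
Your proposal is correct and follows essentially the same route as the paper: the paper's proof likewise picks the single premap $\pi$ with $\pi/2=(1,\ell+1)(2,\ell+2)\cdots(\ell,2\ell)$, asserts $\pi\vee\{\pm V_1,\pm V_2\}=1_{\pm[2\ell]}$ and $\chi(\gamma,\pi)=2$, and concludes from $\tr_{\pi/2}(D)=m_2(D)^\ell$ together with the (implicit) nonnegativity of all other summands. The step you flag as the main obstacle, verifying $\chi(\gamma,\pi^\star)=2$, is simply asserted without computation in the paper, so your explicit treatment of positivity and your fallback pairing only make the same argument more careful.
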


\begin{proof}
	Let $\pi \in \mm{PM}_{2\ell}$ such that $\pi/2 = (1,\ell+1)(2,\ell+2) \cdots (\ell,2\ell)$. Then 
$ \pi \vee \{\pm V_1, \pm V_2  \}  = 1_{\pm [2\ell]}$ and $ \chi(\gamma_2, \pi) = 2$. 
Moreover $\tr_{ \pi /2}(D) = m_2(D)^\ell$, which proves the assertion.
\end{proof}

\begin{lem}\label{lemma_main_ineq}
	For any matrix $X$, let us denote by $R(X):=  \|X\|/ \sqrt{\tr(X^*X)}$ the ratio of its operator norm  and its normalized Frobenius norm.
	We denote $\mathcal{Z}_\ell^\Box = \mathcal{Z}_\ell^\Box(\theta | \theta)$ where $\theta = (d/N, D)$. 
	Then the following inequalities hold:
	\begin{align}
		| \kappa_1[\mathcal{Z}_\ell^\Box] | &\leq  R(D)^\ell \frac{ |\mm{ PM}_\ell|}{n}, \label{expectation}\\
	| \kappa_2[\mathcal{Z}_\ell^\Box] - 1 | &\leq R(D)^{2\ell}\frac{ |\mm{PM}_{2\ell}| }{n}, \label{variance}\\
| \kappa_r[\mathcal{Z}_\ell^\Box] | &\leq  R(D)^{r\ell}\frac{ |\mm{ PM}_{r\ell}|}{n^{r-2}} \ (r \geq 3).
\end{align}
\qed
\end{lem}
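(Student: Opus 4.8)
The plan is to reduce the three bounds to genus estimates for the classical cumulants of the single random variable $S := \Tr(W_\theta^\ell)$ and then invoke Lemma~\ref{lemma_genus}. Since classical cumulants are invariant under translation of the first slot and homogeneous of degree $r$ under scaling, the normalization $\mathcal{Z}_\ell^\Box = (S - \mu_\ell^\Box)/\sqrt{\mm{Var}_\ell^\Box(\theta)}$ gives $\kappa_1[\mathcal{Z}_\ell^\Box] = (\mathbb{E}[S] - \mu_\ell^\Box)/\sqrt{\mm{Var}_\ell^\Box(\theta)}$, $\kappa_2[\mathcal{Z}_\ell^\Box] = \mathbb{V}[S]/\mm{Var}_\ell^\Box(\theta)$, and $\kappa_r[\mathcal{Z}_\ell^\Box] = \kappa_r[S]/\mm{Var}_\ell^\Box(\theta)^{r/2}$ for $r \ge 2$. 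Hence the three inequalities are, respectively, estimates for $\mathbb{E}[S] - \mu_\ell^\Box$, for $\mathbb{V}[S] - \mm{Var}_\ell^\Box(\theta)$, and for $\kappa_r[S]$, each divided by a power of $\mm{Var}_\ell^\Box(\theta)$.

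First I would extract the leading order in $n$ by cancelling the main terms. By Lemma~\ref{lemma_genus} with $r = 1$ the quantity $\mathbb{E}[S]$ is a genus sum over $\mm{PM}_\ell$ with weight $n^{\chi(\gamma,\pi)-1}\lambda^{\#(\pi/2)}\tr_{\pi/2}[D]$, and by Lemma~\ref{lemma_expected_value} its parts with $\chi = 2$ and $\chi = 1$ assemble into $\mu_\ell^\Box = n\alpha_\ell + \beta_\ell$; thus $\mathbb{E}[S] - \mu_\ell^\Box$ is the same sum restricted to $\chi(\gamma,\pi) \le 0$, so each surviving term carries a factor $n^{\chi-1} \le n^{-1}$. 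Likewise Lemma~\ref{lemma_genus} with $r = 2$ writes $\mathbb{V}[S] = \kappa_2[S]$ as a genus sum over $\mm{PM}_{2\ell}$ whose $\chi = 2$ part equals $\mm{Var}_\ell^\Box(\theta)$ by Lemma~\ref{lemma_variance}, so $\mathbb{V}[S] - \mm{Var}_\ell^\Box(\theta)$ runs over $\chi \le 1$ and each term carries $n^{\chi-2} \le n^{-1}$. For $r \ge 3$ no cancellation is required: Lemma~\ref{lemma_upper_bound_of_eular} gives $\chi(\gamma,\pi) \le 2$ for every connecting premap, so every term of $\kappa_r[S]$ carries $n^{\chi-r} \le n^{2-r}$. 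This accounts for the powers $1/n$, $1/n$ and $1/n^{r-2}$ on the right-hand sides.

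Next I would bound a single genus weight. Because $D$ is positive definite, $m_k(D) = \tr(D^{k-2}D^2) \le \|D\|^{k-2} m_2(D)$ for $k \ge 2$, and $m_1(D) \le m_2(D)^{1/2}$ by Cauchy--Schwarz. A premap $\pi \in \mm{PM}_{r\ell}$ has particular cycles of lengths $k_1,\dots,k_p$ with $\sum_j k_j = r\ell$, so writing $\|D\| = R(D)\,m_2(D)^{1/2}$ yields
\[
|\tr_{\pi/2}[D]| = \prod_{j=1}^{p} m_{k_j}(D) \le R(D)^{\sum_j (k_j-2)_+}\, m_2(D)^{r\ell/2} \le R(D)^{r\ell}\, m_2(D)^{r\ell/2},
\]
since $R(D) \ge 1$ and $\sum_j (k_j-2)_+ \le \sum_j k_j = r\ell$. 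Dividing by $\mm{Var}_\ell^\Box(\theta)^{r/2}$ and applying the variance lower bound $\mm{Var}_\ell^\Box(\theta) \ge (\lambda\, m_2(D))^\ell$ from the preceding Proposition cancels the factor $m_2(D)^{r\ell/2}$; bounding the number of premaps by $|\mm{PM}_{r\ell}|$ and reinstating the power of $n$ from the previous step then gives the three displayed inequalities.

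The step I expect to be the main obstacle is the exact matching of scale factors in this last computation. The powers of $m_2(D)$ cancel cleanly, but each genus weight also carries the proportionality factor $\lambda^{\#(\pi/2)}$, whereas the variance lower bound supplies only $\lambda^{r\ell/2}$; one must therefore verify, using the identity $\sum_j k_j = r\ell$ for the particular cycles together with the Euler-characteristic constraints built into $\chi(\gamma,\pi)$ and the explicit forms of $\mu_\ell^\Box$ and $\mm{Var}_\ell^\Box(\theta)$, that over the relevant connecting premaps the residual factor does not exceed $R(D)^{r\ell}$ and that the count stays within $|\mm{PM}_{r\ell}|$. Once this bookkeeping is in place, the remaining ingredients are only the uniform estimate $\chi(\gamma,\pi) \le 2$ of Lemma~\ref{lemma_upper_bound_of_eular} and the enumeration of premaps.
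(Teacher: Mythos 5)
Your proposal is correct and takes essentially the same route as the paper's own proof: the genus expansion of Lemma~\ref{lemma_genus}, cancellation of the $\chi=2$ (resp.\ $\chi=2$ and $\chi=1$) leading terms against $\mathrm{Var}_\ell^\Box$ and $\mu_\ell^\Box$ so that the error terms carry $n^{\chi-r}\le n^{-1}$ or $n^{2-r}$, the bound $\chi(\gamma,\pi)\le 2$ from Lemma~\ref{lemma_upper_bound_of_eular}, a termwise estimate of $\tr_{\pi/2}[D]$ by powers of $\|D\|$ and $m_2(D)$, and the variance lower bound $(\lambda m_2(D))^\ell$. The residual $\lambda^{\#(\pi/2)}$ bookkeeping you flag as the main obstacle is likewise left implicit in the paper's one-line conclusion, so you have not missed any ingredient the paper actually supplies.
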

\begin{proof}
	Let us denote $\gamma_r = (1, 2, \dots, l)(\ell+1, \ell+2, \dots, 2\ell) \dots ((r-2)\ell + 1, (r-1)\ell+2, \dots, r\ell) $
By Lemma~\ref{lemma_genus}, Lemma~\ref{lemma_expected_value} and Lemma~\ref{lemma_variance}, we have
\begin{align}
\kappa_1[\Tr(W_{\theta}^l)]  -  \mu_\ell^\Box(\theta) &= %
\sum_{ \substack{ \pi \in \mm{PM}_\ell   \\ \chi( \gamma_1,\pi) \leq 0 \\ \pi \vee \gamma_1 = 1_{\pm [\ell]}   }}%
	n^{\chi( \gamma_1,\pi) - 1} \left(\frac{d}{n} \right)^{\# \pi/2} \tr_{ \pi/2}[D],\\
\kappa_2[\Tr(W_{\theta}^l)]  - \mm{Var}_\ell^\Box(\theta)  &=%
  \sum_{ \substack{ \pi \in \mm{PM}_{2\ell}   \\ \chi( \gamma_2,\pi) \leq 1 \\ \pi \vee \gamma_2 = 1_{\pm [2\ell]} } }%
 n^{\chi( \gamma_2,\pi) - 2} \left(\frac{d}{n} \right)^{\# \pi/2}\tr_{ \pi /2}[D].
\end{align}
Hence by Lemma~\ref{lemma_upper_bound_of_eular}, $(dn^{-1})^{\# \pi/2}\tr_{ \pi /2}[D] \leq (dn^{-1})^\ell \|D\|^\ell$ and $\mm{Var}^\Box_\ell(\theta) \geq (dn^{-1}m_2(D))^\ell $,  the assertions hold.

\end{proof}

Now we have prepared to prove our main theorem.

\begin{proof}[Proof of Theorem \ref{theorem_main}]
We have $\kappa_r[\mathcal{Z}_\ell^\Box(\theta_n)] = 1 + o(1)$ if $r=2$,
 otherwise $o(1)$, as  $n \ra \infty$ by Lemma~\ref{lemma_main_ineq}.
Hence each cumulant of $\mathcal{Z}_\ell^\Box(\theta_n)$ converges to that of the standard normal distribution, which implies that the law of $\mathcal{Z}_\ell^\Box(\theta_n)$ converges to a standard normal distribution.
\end{proof}

\section{Application to 2D ARMA model}

\begin{defn}
A two dimensional autoregressive moving average (ARMA) model is  a family of random variables $y_{ij}$ with 
\begin{align*}
 \sum_{i=1}^{p_1}\sum_{j=1}^{p_2}a_{ij}y_{h-i + 1, w-j+1} %
 =  \sum_{i=1}^{q_1}\sum_{j=1}^{q_2}b_{ij}\epsilon_{w-i+1, h- j + 1}, \mm{\ for \ any \ } 1\leq h \leq H, 1 \leq w  \leq W,
\end{align*}
where
\begin{enumerate}
\item  orders $p_1, p_2, q_1, q_2 \in \N$, 
\item  AR kernel $a \in M_{p_1, p_2}(\R)$ and MA kernel $b \in M_{q_1, q_2}(\R)$ with $a_{11} =1$ and  $b_{11} \neq  0$,
\item	$\{Z_{ij} \mid i, j \in \Z \}$ is a family of i.i.d.\  random  variables distributed with $\mm{Normal}(0,1)$. 
\end{enumerate}
In the case $p_1 = p_2 = 1$, we call that $\mm{MA}(q_1,q_2)$ model.
\qed
\end{defn}

At first we consider converting  each multi-tuple of MA model  to a compound Wishart model.
Let $y_{i,j}(n)$ be i.i.d.\ $N$ copies of an element distributed with $MA(q_1, q_2)$,  that is, 
\begin{align*}
 y_{h, w}(n) %
 =  \sum_{i=1}^{q_1}\sum_{j=1}^{q_2}b_{ij}\epsilon_{h-i+1, w - j + 1}(n) %
 \ ( 1 \leq h \leq H, \ 1 \leq w \leq W, 1 \leq n \leq N),
\end{align*}
where $\epsilon_{h,w}(n) $ are i.i.d.\ random variables distributed with $\mm{Normal}(0,1)$. 
We write $H_e = H - 1 + q_1, W_e = W + 1 - q_2$.
Define $Y \in M_{HW, N}(\R)$ by $Y_{h (W -1 ) + w, n} = y_{h, w}(n)/\sqrt{N}$.
We define a $HW \times H_eW_e$ matrix $B$ by
\[
B_{ (h-1) W + w , (h + i -2 ) W_e  + j + w -1 }= b_{i, j}.
\]
for $ 1\leq h  \leq H, 1\leq w \leq W$, $1 \leq i \leq q_1$, $1 \leq j \leq q_2$, and the other entries are zero.
Then the law of $Y^TY$ coincides with that of 
\[
 W_{(H_eW_e/N, B^TB)},
\]
as the $M_{N}(\R)$-valued random variables.
At last we define a subset of parameters for two dimensional MA models.

\begin{defn}
	A compound  Wishart model for two dimensional is a family  $(W_\theta)_{\theta \in \Theta_\mm{MA}}$ where  
	\[
	\Theta_\mm{MA} =  \bigcup_{H_e,W_e,N \in \N, H_eW_e \geq N} \{ (H_eW_e/N, B^TB) \mid q_1, q_2 \in \N, b \in M_{q_1, q_2}(\R) , b_{1,1} \neq 0\}.
	\]
	
\end{defn}

\begin{note}
	Let  $c=(c_n)_{n \in \Z}$ be a sequence of real numbers which is zero except for finite number of indexes.
	The Toeplitz random matrix $T_c$ of size $H$ of the sequence is defined by  $T_{i,j}= c_{j-i}$ if $j-i \geq 0$, otherwise $0$ for $i,j \leq H$.
	Let us define Fourier transform of $(c_n)_n$ by 
	\[
	\hat{c}(\xi) = \sum_{n \in Z} c_n \exp (-in\xi), \xi \in [0,2\pi].
	\]
	
	Let $t_c$ be a bounded operator on the Hilbert space $l^2(\Z):=\{(a_n)_{n \in \Z} \mid \|a_n\|_2 < \infty \}$ defined by $(t_c)_{i,j} = c_{j-i}$, where $\|a_n\|_2 := \sqrt{\sum_{ n\in \Z}|a_n|^2 }$. 
	Then $\|T_c\| \leq \|t_c\|  = \|\hat{c}\|_\infty = \sup_{\xi \in [0,2\pi]}|\hat{c}(\xi)| \leq \|c\|_1 = \sum_{n \in \N}|c(n)|$ by the basic results of  Toeplitz operators.
\end{note}

\begin{lem}
	Let $D = B^TB$ and $\zeta= H_eW_e/HW$. Then the following inequalities hold:
	\begin{enumerate}
		\item 	$\tr(D^2) \geq \zeta^{-1}\|b\|_2^2$,
		\item  $\|D\| \leq  \|b\|_1^2$,
		\item 	$ R(D) \leq \sqrt{\zeta }\|b\|_1^2/ \|b\|_2$.
	\end{enumerate}
\end{lem}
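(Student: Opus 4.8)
The plan is to obtain the third inequality formally from the first two, and then to establish those two separately, using that $D=B^TB$ is positive semidefinite and that $B$ is a finite truncation of the two--dimensional convolution operator with kernel $b$. Since $D$ is self--adjoint, $R(D)=\|D\|/\sqrt{\tr(D^2)}$, so once the first two inequalities are available one simply substitutes
\[
R(D)=\frac{\|D\|}{\sqrt{\tr(D^2)}}\le \frac{\|b\|_1^2}{\sqrt{\zeta^{-1}\|b\|_2^2}}=\sqrt{\zeta}\,\frac{\|b\|_1^2}{\|b\|_2},
\]
which is the third inequality. The problem therefore reduces to the estimates on $\|D\|$ and on $\tr(D^2)$.

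For the bound on $\|D\|$, I would first note $\|D\|=\|B^TB\|=\|B\|^2$, so it suffices to prove $\|B\|\le\|b\|_1$. The index formula $B_{(h-1)W+w,\,(h+i-2)W_e+j+w-1}=b_{ij}$ exhibits $B$ as a two--level banded Toeplitz matrix, hence as a finite section of the convolution operator $x\mapsto b\ast x$ on $\ell^2(\Z^2)$: its blocks are indexed by the first coordinate $h$ and banded of width $q_1$, and each block is itself a banded Toeplitz matrix of width $q_2$ in the second coordinate. By the two--dimensional analogue of the estimate recalled in the Notation, this operator has norm $\|\hat b\|_\infty\le\sum_{i,j}|b_{ij}|=\|b\|_1$, where $\hat b(\xi,\eta)=\sum_{i,j}b_{ij}e^{-\bfi(i\xi+j\eta)}$; equivalently this is Young's inequality $\|b\ast x\|_2\le\|b\|_1\|x\|_2$. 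Since compressing to a finite window (and zero--padding the input) does not increase the operator norm, $\|B\|\le\|b\|_1$, hence $\|D\|\le\|b\|_1^2$.

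For the lower bound on $\tr(D^2)$, I would work with the $HW\times HW$ Gram matrix $BB^T$ of the rows of $B$, using $\Tr(D^2)=\Tr\big((B^TB)^2\big)=\Tr\big((BB^T)^2\big)=\|BB^T\|_F^2$ and discarding the off--diagonal terms,
\[
\|BB^T\|_F^2=\sum_{r,s}\big((BB^T)_{rs}\big)^2\ge\sum_{r}\big((BB^T)_{rr}\big)^2 .
\]
Each diagonal entry $(BB^T)_{rr}$ is the squared Euclidean norm of the $r$-th row of $B$, and an interior row $r=(h,w)$ carries exactly the kernel coefficients $\{b_{ij}\}$, so $(BB^T)_{rr}=\|b\|_2^2$. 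Summing over the $HW$ rows and normalizing the trace by the dimension $H_eW_e$ of $D$ produces the factor $\zeta^{-1}=HW/(H_eW_e)$ and yields the lower bound asserted in the first inequality.

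The main obstacle is precisely this last step: near the edges of the window some kernel coefficients fall outside the admissible index range $[1,H_eW_e]$, so the boundary rows of $B$ do not carry the full kernel and their norms are strictly smaller than $\|b\|_2^2$. The careful point is to show that the interior rows---of which there are $HW$ up to a lower--order boundary correction---already suffice for the stated bound, i.e.\ to control the boundary defect in both $\Tr(D)=\|B\|_F^2$ and in the diagonal sum above. This two--dimensional boundary bookkeeping is where the real work lies; the operator--norm estimate and the reduction of the third inequality to the first two are routine.
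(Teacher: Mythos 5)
Your overall structure matches the paper's: part (3) is deduced from (1) and (2) exactly as the paper does it, and for (1) both you and the paper bound $\Tr(D^2)=\sum_{r,s}\bigl((BB^T)_{rs}\bigr)^2$ from below by its diagonal terms. For (2) you take a genuinely different route: you view $B$ as a compression of the two--dimensional convolution operator on $\ell^2(\Z^2)$ and invoke the 2D symbol bound (Young's inequality) $\|\hat b\|_\infty\le\|b\|_1$. The paper instead stays strictly one--dimensional: it factors $B=VT$ with $V$ a selection matrix of norm at most one and $T=T_c$ the upper--triangular Toeplitz matrix of the row--major flattening $c$ of $b$, then decomposes $T_c=\sum_{i=1}^{q_1}S^{(i-1)m}T_{c_i}$ into shifted 1D Toeplitz matrices built from the rows $c_i=b_{i,\cdot}$ and applies the 1D estimate $\|T_{c_i}\|\le\|c_i\|_1$ recalled in the Notation. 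Your argument is shorter but uses a 2D fact the paper deliberately avoids; both yield $\|B\|\le\|b\|_1$ and hence $\|D\|\le\|b\|_1^2$.

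Two points need fixing. First, the ``main obstacle'' you defer to future bookkeeping does not exist: the extended dimensions $H_e=H+q_1-1$, $W_e=W+q_2-1$ are chosen precisely so that the column index $(h+i-2)W_e+j+w-1$ stays in $[1,H_eW_e]$ and is injective in $(i,j)$ for every fixed $(h,w)$; the zero--padding lives on the input (column) side, so every one of the $HW$ rows of $B$ carries the full kernel and $(BB^T)_{rr}=\|b\|_2^2$ with no boundary defect. Second, watch the exponent: your diagonal sum is $\sum_r\bigl((BB^T)_{rr}\bigr)^2=HW\,\|b\|_2^4$, so after normalizing by $H_eW_e$ you get $\tr(D^2)\ge\zeta^{-1}\|b\|_2^4$, not $\zeta^{-1}\|b\|_2^2$; your write--up conflates this with $\Tr(D)=HW\|b\|_2^2$. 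The paper's own proof contains the same slip (and the mismatch propagates to part (3) and to the proof of Theorem~\ref{theorem_MA}, where the bound appears as $\|b\|_1/\|b\|_2$), so this is a defect of the statement rather than of your method, but as written your claimed conclusion does not follow from the computation you set up.
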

\begin{proof}

	At first $\Tr(B^TBB^TB)$ is equal to 
	\[
	\sum_{(i,i_1,i_2,i_3,j,j_1,j_2,j_3) \in \mathcal{S}} b_{i,j}b_{i_1,j_1}b_{i_2,j_2}b_{i_3,j_3},
	\]
	 where $\mc{S} \subset [q_1]^4 \times [q_2]^4$ is defined as follows: $(i,i_1,i_2,i_3,j,j_1,j_2,j_3) \in \mc{S}$  if there are some $ h, h' \in [H],  w, w' \in [W]$ such that
	 \[
	 	h + i = h' + i_3, w + j = w' + j_3,  h + i_1 = h' + i_2, w + j_1 = w' + j_2.	
	 \]
	Summing up the terms whose indexes correspond to the case $h=h'$ and $w=w'$, we get $HW\|b\|_2^2$.  Hence we have proven (1).

	Next let us define $HW \times H_eW_e$ matrix $V$ by 
	\[
	V_{ (h-1) W + 1 , ( h  - 1 ) W_e   + w  }= 1,
	\]
	for $1 \leq w \leq W$ and $1 \leq h \leq H$,
	and the other entries are zero.
	Let us define $H_eW_e \times H_eW_e$  matrix $T$ by
	for $1 \leq w \leq W_e$ and $1 \leq h \leq H_e$,	
	\[
	T_{ (h-1) W_e + w , (h + i -2 ) W_e  + j + w -1 }= b_{i, j}.
	\]
	where $ 1 \leq i \leq q_1 $, $1 \leq j \leq q_2 $ such that $(h + i -2 ) W_e  + j + w - 1 \leq H_eW_e$ and the other entries are zero.
	Then we have $B = VT$.
	Moreover, let us define a sequence $c =(c_n)_{0 \leq n \leq H_eW_e}$ by $c_{(i-1)W_e + j -1 } := b_{i,j}$ for $1 \leq i \leq q_1$ and $1 \leq j \leq q_2$ other wise $0$. Then the matrix $T$ is equal to the upper triangular Toeplitz matrix $T_c$.
	Let us denote $c_i(n) = b_{i,n}$, and $S$ be a $H_eW_e \times H_eW_e$-nilpotent matrix $S$ such that $S_{i,i+1} = 1 ( 1 \leq i \leq H_eW_e)$, and the orher entries are $0$. Then, there is an $m \in \N$ such that
	\[
	T_c =  \sum_{i = 1}^{q_1} S^{(i-1)m}T_{c_i}.
	\]
	Hence $\|T_c\| \leq  \sum_{i = 1}^{q_1} \|T_{c_i}\|$, and
	 $\|D\| \leq \|T_c\|^2\|V^TV\| \leq \|T_c\|^2 \leq (\sum_{i = 1}^{q_1} \|T_{c_i}\|)^2$.
	 Since $\|T_{c_i}\|  \leq  \|c_i\|_1$, the assertion (2) holds.

	The last claim (3) directly follows from (1) and (2).

\end{proof}

\begin{thm}\label{theorem_MA}
	For a fixed MA filter $b \in M_{q_1, q_2}(\R)$, 
	let $\theta_N = (H_eW_e/N, B^TB)$ constructed as above.
	Then the law of $Z_\ell^\Box$ converges weakly  to  $\mm{Normal}(0,1)$ as $N, HW \ra \infty$ with $HW \geq N$. 
\end{thm}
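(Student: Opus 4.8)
The plan is to obtain Theorem \ref{theorem_MA} as a direct application of the main theorem, Theorem \ref{theorem_main}, once the growth hypothesis \eqref{key_condition} has been verified for the compound Wishart parameters coming from the MA model. First I would record the identifications already fixed in the construction: the parameter is $\theta_N = (H_eW_e/N, D_N)$ with $D_N = B^TB$, the Wishart size is $n = N$, and the deterministic size is $d = H_eW_e$; moreover $Y^TY$ has the same law as $W_{\theta_N}$, so $Z_\ell^\Box$ coincides with $\mathcal{Z}_\ell^\Box(\theta_N \mid \theta_N)$ in the sense of Lemma \ref{lemma_main_ineq}. It therefore suffices to check $R(D_N) = o(N^{1/(3\ell)})$ as $N \ra \infty$ and then to invoke Theorem \ref{theorem_main} with $n = N$.

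The key estimate is supplied by the preceding lemma, part (3), namely $R(D_N) \leq \sqrt{\zeta_N}\,\|b\|_1^2/\|b\|_2$ with $\zeta_N = H_eW_e/HW$. Since the filter $b$ is fixed and $b_{11} \neq 0$, the quantities $\|b\|_1$ and $\|b\|_2$ are fixed strictly positive constants, so the whole right-hand side is governed by $\sqrt{\zeta_N}$. Because the orders $q_1, q_2$ are fixed while $H, W \ra \infty$, the differences $H_e - H$ and $W_e - W$ are constants depending only on $q_1, q_2$, so $\zeta_N \ra 1$; in particular $\zeta_N$ stays bounded, and hence $R(D_N) \leq M$ for some constant $M$ independent of $N$.

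Consequently $R(D_N) = O(1)$, and since $N^{1/(3\ell)} \ra \infty$, the condition \eqref{key_condition} holds (with $n = N$) for every $\ell$; there is no constraint on $\ell$ here precisely because $R(D_N)$ remains bounded, the most demanding case (large $\ell$, where $N^{1/(3\ell)}$ is closest to a constant) being harmless. Applying Theorem \ref{theorem_main} then gives weak convergence of the law of $\mathcal{Z}_\ell^\Box(\theta_N \mid \theta_N) = Z_\ell^\Box$ to $\mm{Normal}(0,1)$.

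I expect the only delicate points to be bookkeeping rather than analysis. The main thing to get right is the matching of dimensions: the hypothesis of Theorem \ref{theorem_main} is phrased through the Wishart size $n$, which is $N$ here and not $HW$ or $H_eW_e$; one should also check that the admissibility constraint $d = H_eW_e \geq N$ built into $\Theta^{\mm{CW}}$ is compatible, in the limit, with the stated regime $HW \geq N$, which again follows from $\zeta_N \ra 1$. No genuinely new estimate is required beyond the preceding lemma, since the substantive analytic work, the cumulant bounds of Lemma \ref{lemma_main_ineq} together with the lower bound $\tr(D_N^2) \geq \zeta_N^{-1}\|b\|_2^2$, has already been carried out.
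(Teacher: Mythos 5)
Your proposal is correct and follows essentially the same route as the paper: both rest on the bound $R(D_N) \leq \sqrt{H_eW_e/HW}\,\|b\|_1^2/\|b\|_2$ from the preceding lemma, the observation that this stays bounded as $HW, N \ra \infty$ (hence is $o(N^{1/(3\ell)})$ for every $\ell$), and an appeal to Theorem \ref{theorem_main} with $n = N$. Your additional bookkeeping about the identification of the Wishart size $n$ with the batch size $N$ is a helpful clarification but not a different argument.
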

\begin{proof}
	This follows from the estimation $R(D_N) \leq \sqrt{H_eW_e/HW} \|b\|_1/\|b\|_2 \ra \|b\|_1/\|b\|_2$ as $HW , N \ra \infty$ and Theorem~\ref{theorem_main}.
\end{proof}
\begin{rmk}[Convert ARMA to MA]
For a two dimensional ARMA model, define polynomials of commutative variables $z_1, z_2$ by
	\[P(z_1, z_2 ) = \sum_{i=1}^{p_1}\sum_{j=1}^{p_2} a_{ij}z_1^{i-1} z_2^{j-1}, \ 
	Q(z_1, z_2 ) = \sum_{i=1}^{q_1}\sum_{j=1}^{q_2} b_{ij}z_1^{i-1} z_2^{j-1}.
	\]
Consider the following formal power series:
	\[
	\frac{Q(z_1, z_2) }{P(z_1, z_2)} = \sum_{i=1}^{\infty} \sum_{j=1}^{\infty}g_{ij}z_1^{i-1} z_2^{j-1}.
	\]
Equivalently, coefficients $g_{ij}$ are determined by the following recurrent equations: for any $i,j \in \N$, 
\[
	g_{i,j} = b_{i,j} - \sum_{\substack{ 1 \leq k \leq i,\ 1 \leq l \leq  j, \\ (k,l) \neq (1,1)}} g_{i-k+1, j-l+1}a_{k,l},
	\]
	where we set $b_{ij} = 0$ if $i > q_1$ or $j > q_2$, and set
	$a_{kl} = 0$ if  $k  > p_1$ or $l > p_2$.

The AR kernel is said to be reversible if  $P(z_1, z_2)$ has no zero point in unit disc $\{ (z_1, z_2) \in \C^2 \mid |z_1|^2 + |z_2|^2 \leq 1 \}$ when it is regard as an function on $\C^2$.
Because we are only interested in testing optimized stable ARMA model, we may assume  that each ARMA model has  reversible AR kernel  and we cut $g_{ij}$ by sufficiently large max orders $o_1, o_2$. Then we can treat the model as MA($o_1,o_2$).
\qed
\end{rmk}

\section{Numerical Simulations}
Our algorithm consists of the following steps.
At first for i.i.d.\ $N$ sampling $y(m) \in \R^{HW} ( m= 1,2, \dots, N)$ from a fixed ARMA model, we convert it to a matrix $X \in M_{HW, N}(\R)$ by $X_{hw,m} = y(m)_{h,w}$. We call the index $N$ the batch size of the sample.
For $\ell = 1,2$, we calculate 
\[
\mu_\ell := \Tr\left[(X^T X/N)^\ell\right].
\]

For the test of ARMA model, converting to a MA model if necessary, we compute  
the parameter $\theta=(H_eW_e/N, B ) \in \Theta_{\mm{MA}}$.
Let $m_k = \tr(B^k)$ and $\lambda = H_eW_e/N$, then we have estimations of variance and mean as follows:
\begin{align*}
\mathrm{Var}^\Box_1 &= 2 \lambda m_2, \	\mathrm{Var}^\Box_2 = 2(4 \lambda^3m_1^2m_2 + 2 \lambda^2 m_2^2 + 8 \lambda^2 m_1m_3 + 4 \lambda m_4),\\
\mu^\Box_1 &= N\lambda m_1, \ \mu^\Box_2 = N( \lambda m_2 + \lambda^2 m_1^2) + \lambda m_2.
\end{align*}

At last for $\ell=1,2,$ we calculate the testing statistics 
\[
z_\ell :=  \frac{\mu_\ell - \mu^\Box_\ell}{\sqrt{\mathrm{Var}^\Box_\ell}}.
\]
By Theorem \ref{theorem_MA}, $z_1$, $z_2$ are approximately distributed with  $\mm{Normal}(0,1)$ for a sufficiently large $N$.\\
\subsection{Plot of Z-scores}
At first we  plot Z-score $z_2$ for samples generated by a fixed ARMA model.
Consider  following models.
\begin{enumerate}
\item MA($3,3$) with $b_{ij}$ are generated by uniform distribution on $[-1,1]$ and $b_{1,1} = 1$.
\item ARMA($3,3,3,3$) with reversible  AR kernel. We set max orders $o_1 = o_2 = 24$.
\end{enumerate}
We generate $10000$ realization of $16 \times 16$ data with batch size $16$ from each model, and plot $z_2$ in  figure \ref{fig:zero} and figure \ref{fig:one}, respectively.

\begin{figure}[htbp]
	\centering
	\includegraphics[width=0.45\linewidth]{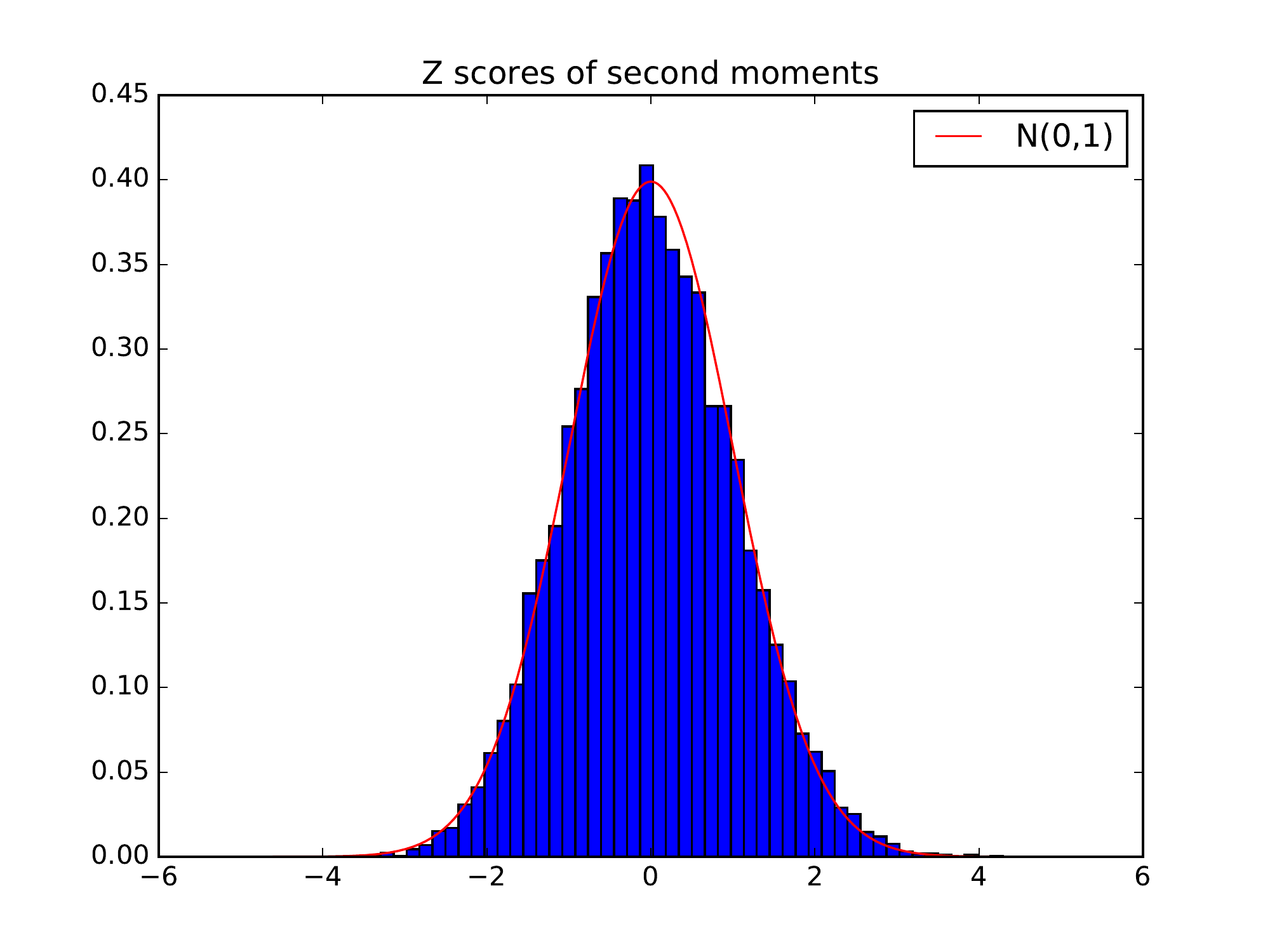}
	\hspace{0.4cm}
	\includegraphics[width=0.45\linewidth]{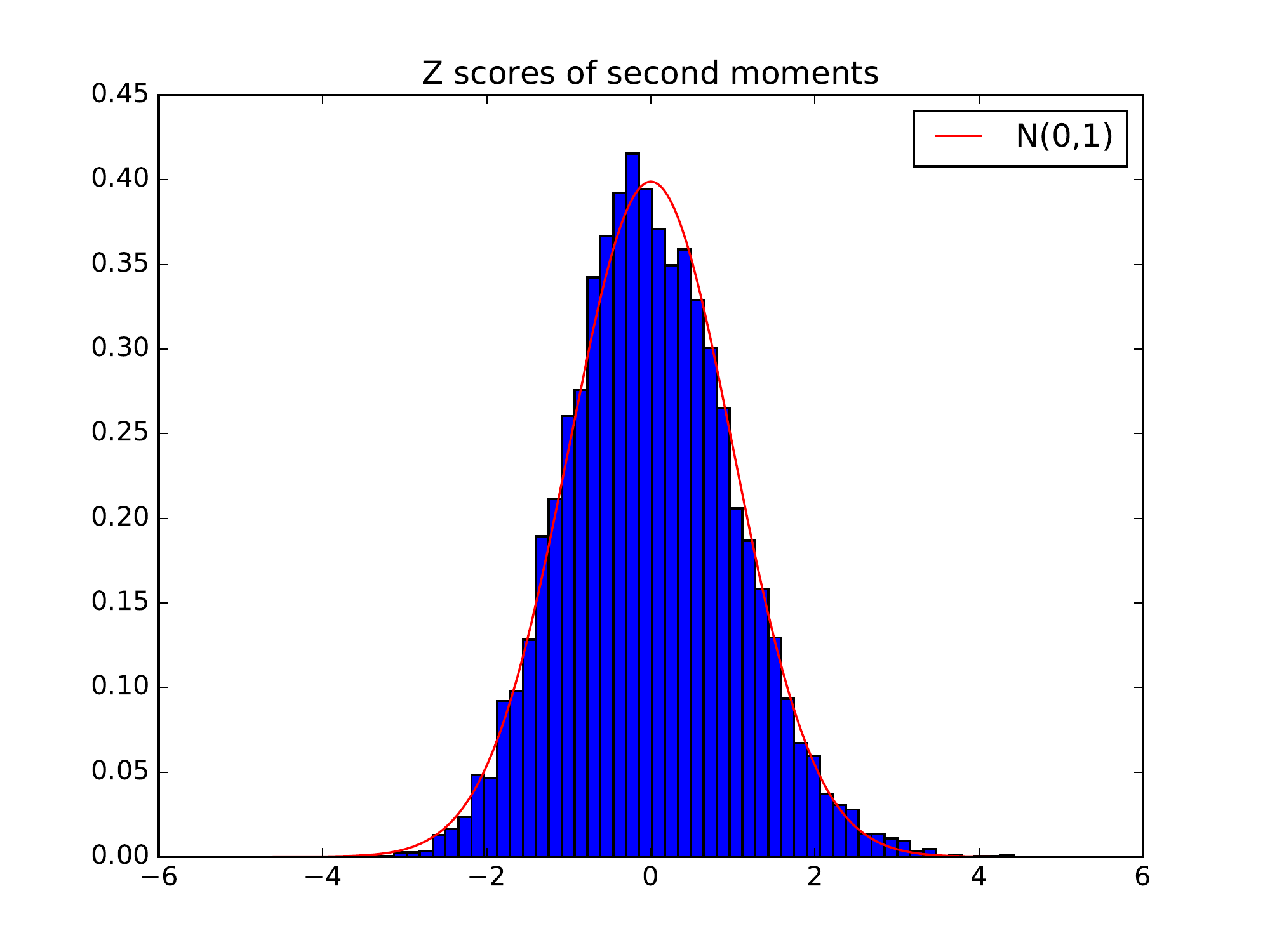}	
 	\caption{Fluctuations of MA(3,3) (left histogram) and of MA(3,3,3,3) (right one)}
	\label{fig:zero}
\end{figure}

\subsection{Goodness of fit test}
Next we observe that our Z-test works. 
We generate $300$ pairs of 2D ARMA models whose AR kernels are trivial or reversible. We assume AR kernel size is less than four, MA kernel size is less than seven, absolute value of each parameter is less than or equal to one, and $\sigma = 1$.
For each pair, we generate a sample with shape $16 \times 16$ and fixed batch size $N$ from one of the ARMA models. We calculate the Z score $z_2$ for each pair. We also calculate $L^1$ distance of their kernels:
\[
d = \sum_{i,j}  | g^{(1)}_{ij} - g^{(2)}_{ij} |,
\]
where we set $g_{ij} = b_{ij}$ for MA models. We set max orders $o_1 = o_2 = 24$ for non MA model. We plot all points $(d + \epsilon, |z_2| + \epsilon)$ from these pairs for $N = 1,16,32,64,128$ and $256$ (we add $\epsilon = 10^{-8}$ to avoid overflow).

\begin{figure}[htbp]
	\centering
	\includegraphics[width=0.45\linewidth]{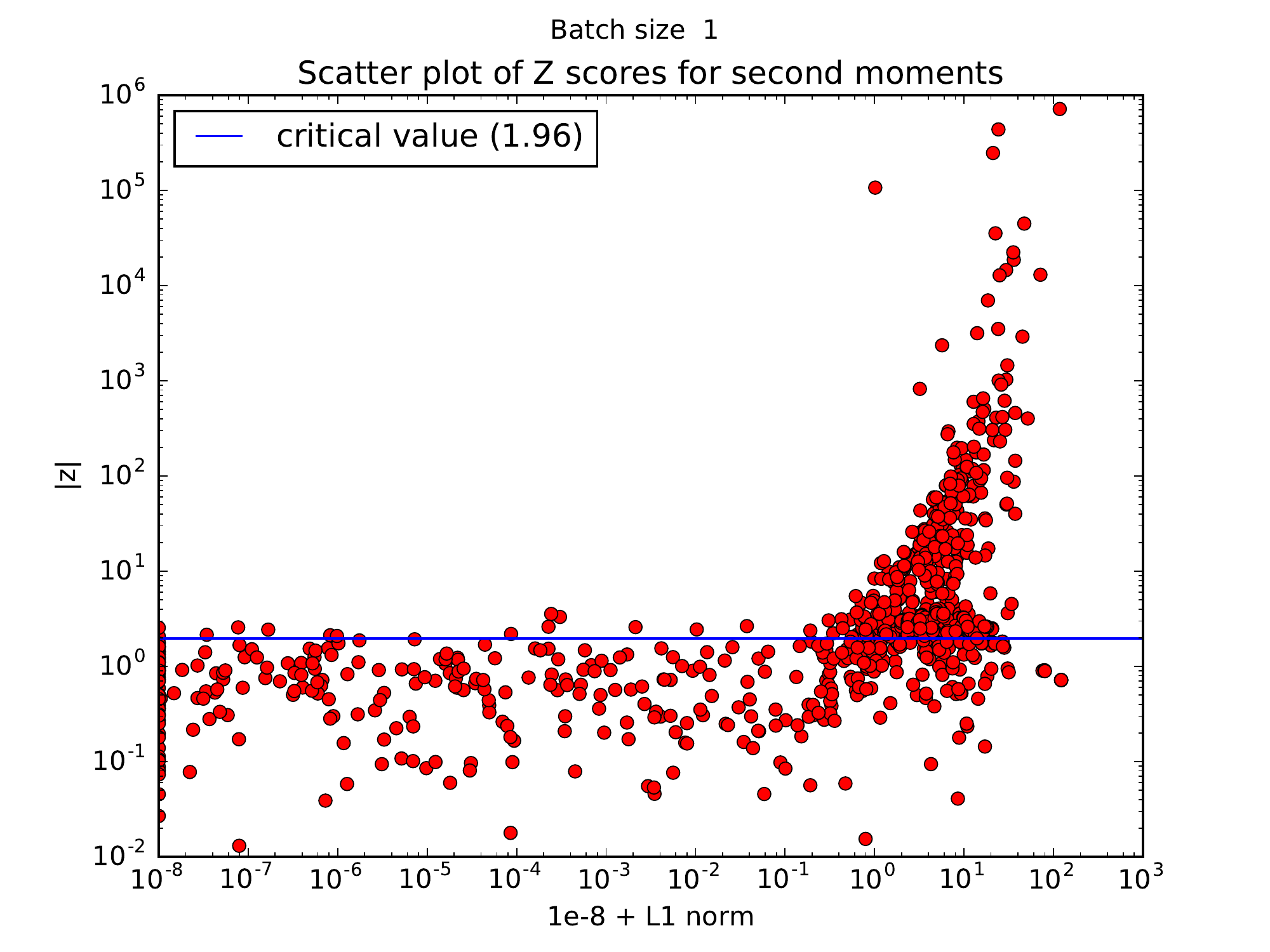}
	\hspace{0.4cm}
	\includegraphics[width=0.45\linewidth]{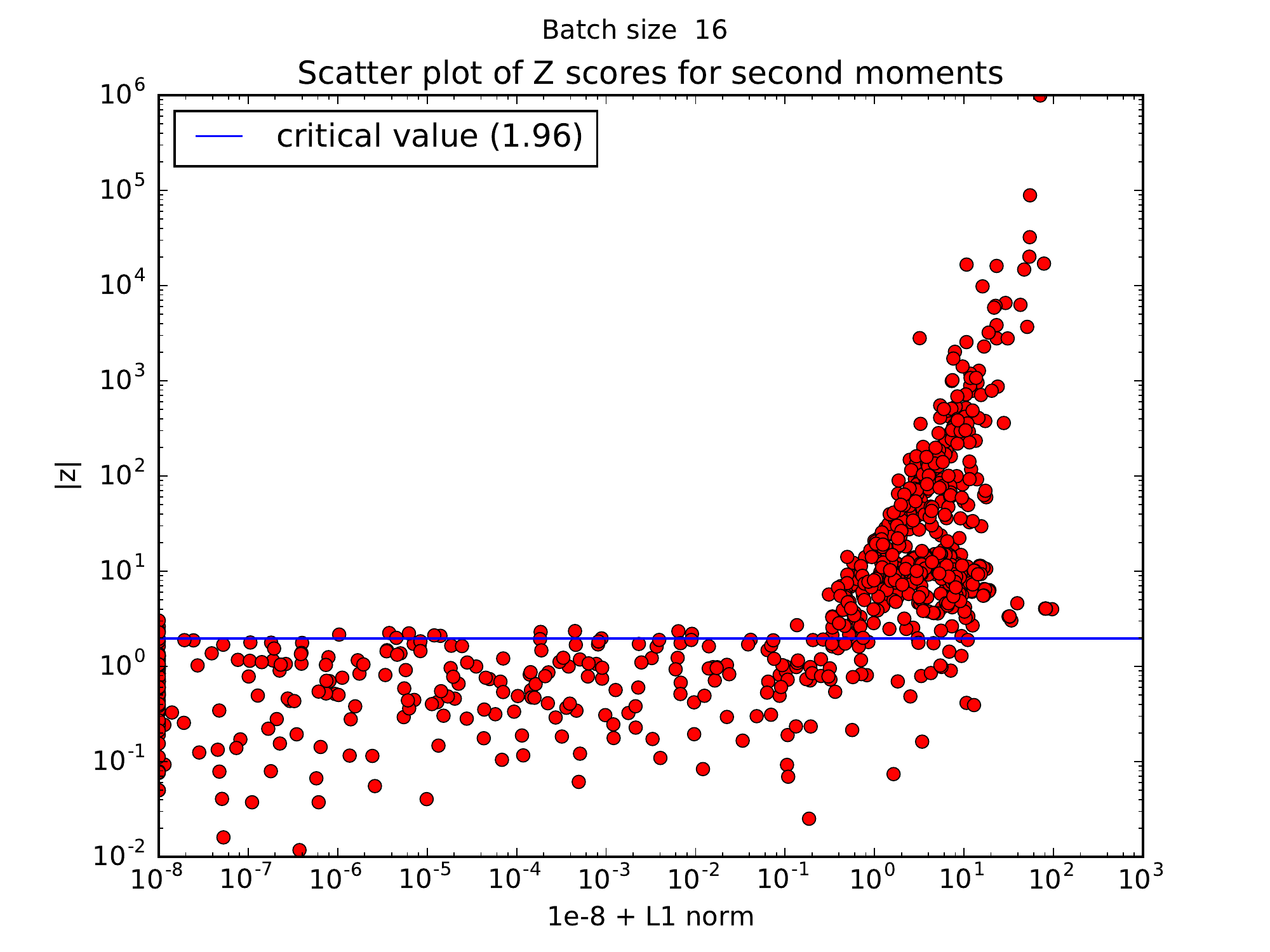}
	\includegraphics[width=0.45\linewidth]{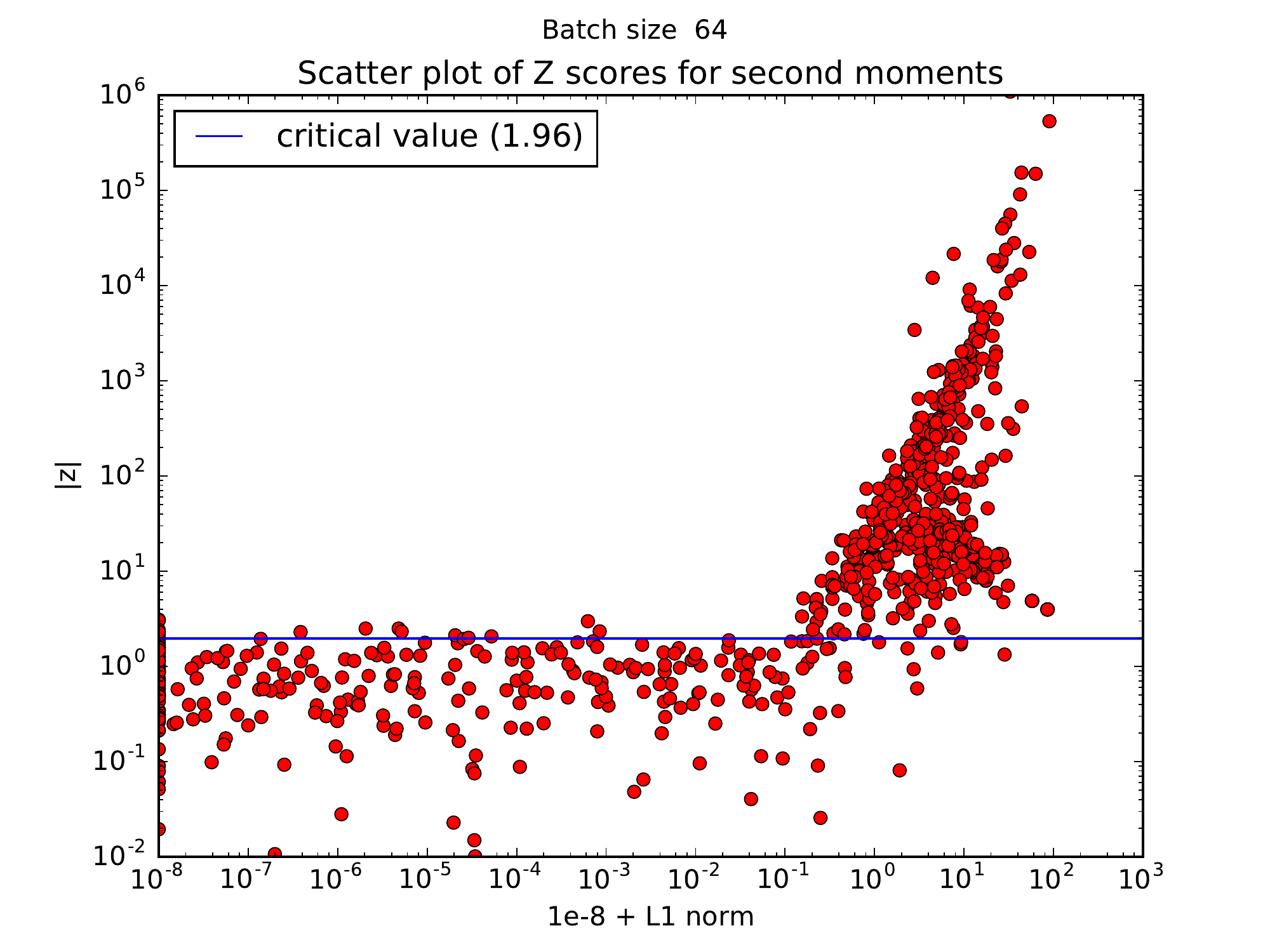}
	\hspace{0.4cm}
	\includegraphics[width=0.45\linewidth]{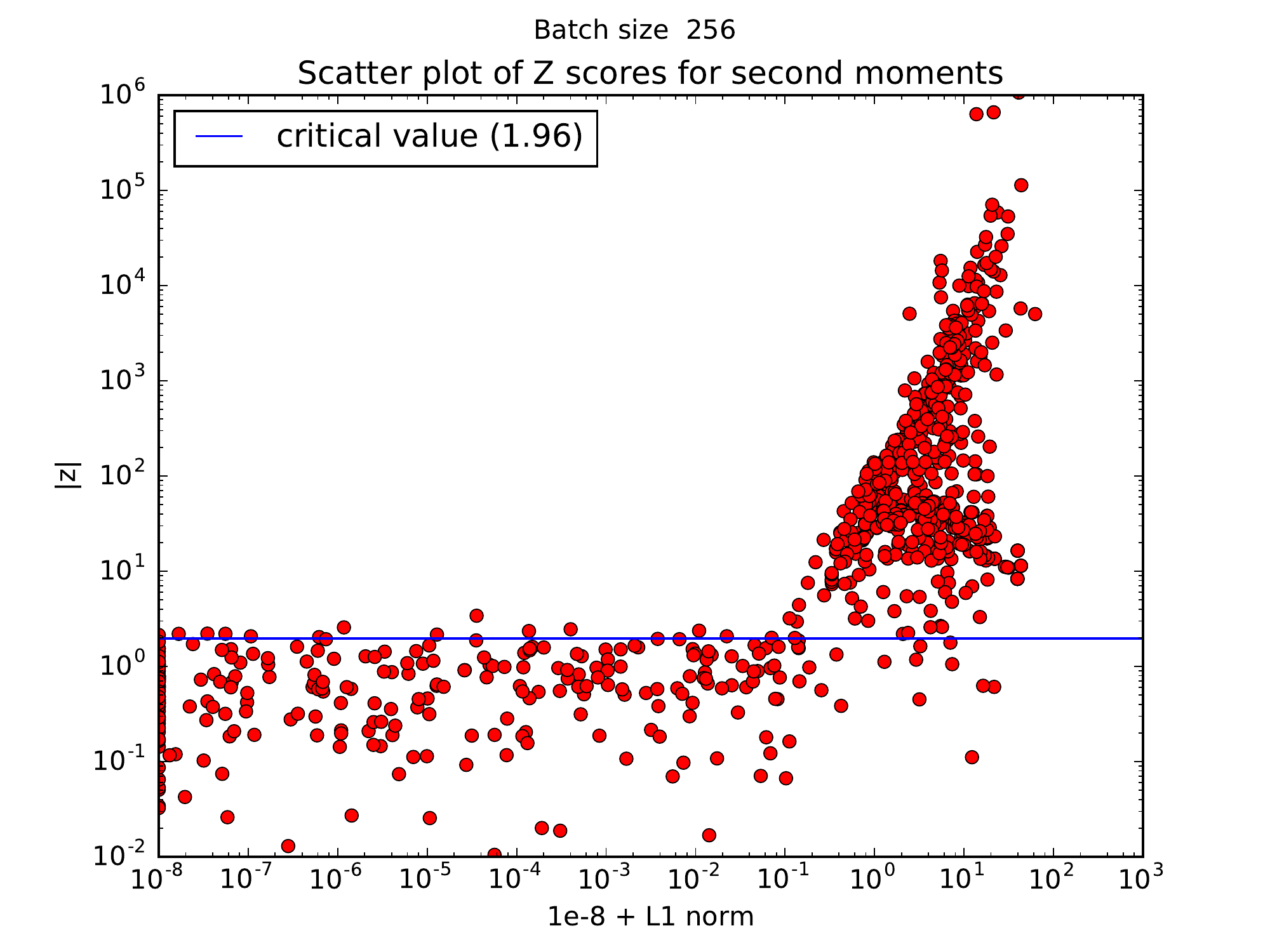}
	\label{fig:1}
	\caption{Z-scores under different batch sizes}
\end{figure}

We show results for $N=1,16,64,256$ in figures \ref{fig:1} , respectively. The horizontal lines in figures are critical value $1.96$
of Z-test.
In our experiment, we define a pair of 2D ARMA model is true (false) if $d < 0.1$ $( d \geq 0.1)$ and the result of hypothesis testing is negative (positive) if $|z_2| > 1.96$ $( |z_2| \leq 1.96)$, respectively. 
We plot in figure \ref{fig:last} the ratios of true-negative/true and false-positive/false  for $10$ realizations of $300$ pairs of ARMA models.
We observe that as the batch size increase, the ratio false-positive/false falls. 
What is noteworthy is that  batch size need not to be large as 256, and batch size from 32 to 64 is enough to archive false-positive/false $\simeq 0.05$. 
The Marchenko-Paster lambda $H_eW_e/N$ for each batch size 32,64 and 256 is  approximately equals to $8, 4$ and $1$, respectively.
Unlike previous study \cite{hasegawa2017fluctuations}, Marchenko-Paster lambda need  not be  close to one and random matrix ensembles need not be close to square for our hypothesis testing.

\begin{figure}[htbp]
	\centering
	\includegraphics[width=0.7\linewidth]{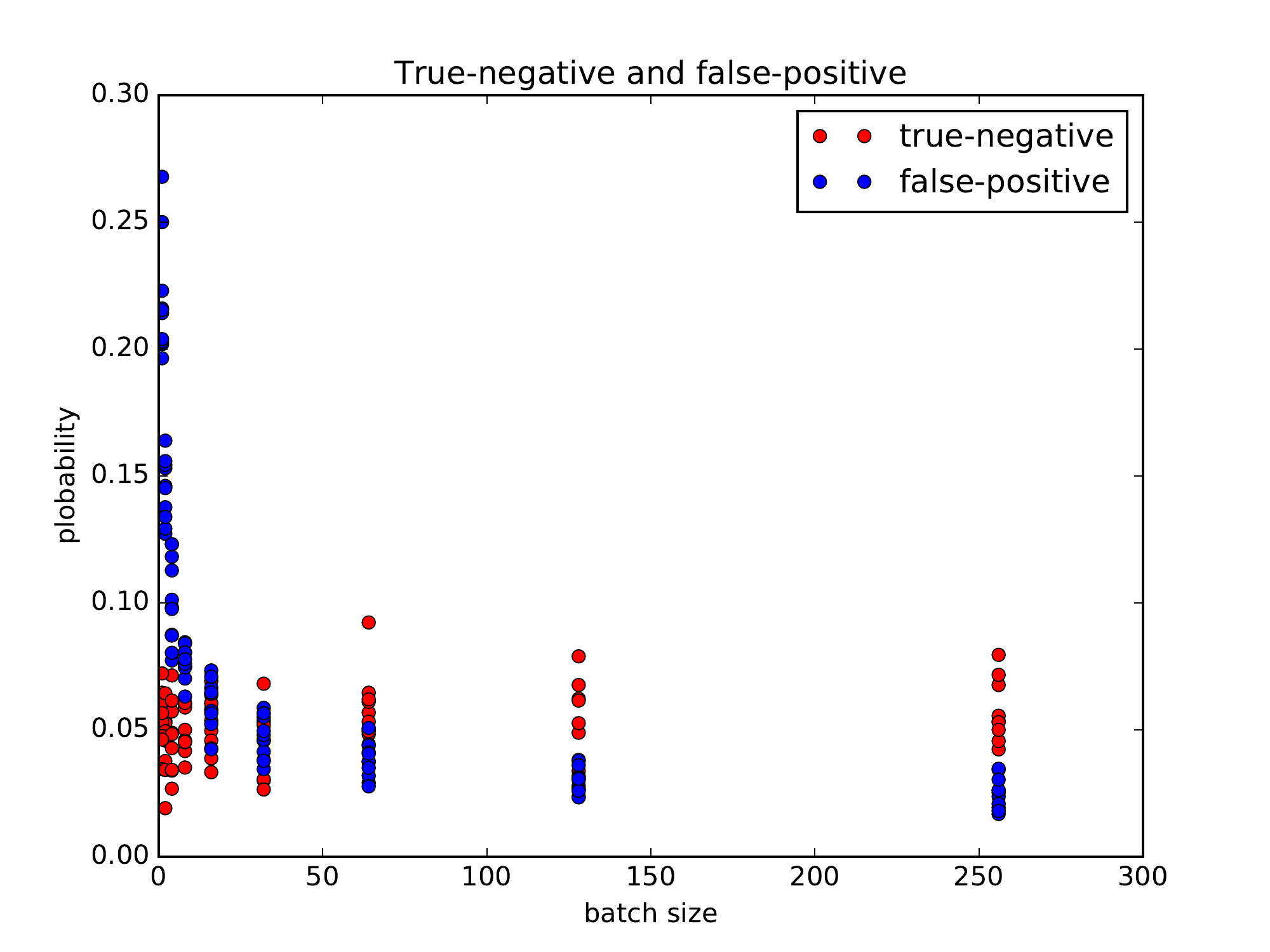}
	\label{fig:1ast}
    \caption{True-negative and false-positive}
\end{figure}

\section{Conclusion}

This paper introduces FDE Z-score,  an approximation of Z-scores of compound Wishart models. 
The key point of our method is the use of FDE to the fluctuations. 
It allows us to efficiently approximate the fluctuations of compound Wishart matrices,
so that we do not need to determine the limit eigenvalue of parameter matrices.
We demonstrate that our method works well for 2D ARMA models.
It turned out that our method does not require too large model size, and works for ARMA model of $16 \times 16$ 2D-data. 
A future direction is to extend its scope  other than compound Wishart models.

\section*{Acknowledgements}
We would like to express our gratitude to Hiroaki Yoshida for incisive comments.
I am grateful to  Noriyoshi Sakuma for carefully proofreading the manuscript.

\bibliographystyle{plain}
\bibliography{reference_2DARMA}

\end{document}